\newcommand{\email}[1]{\texttt{\href{mailto:#1}{#1}}}
\newtheorem{theorem}{Theorem}[section]
\newtheorem{lemma}[theorem]{Lemma}
\newtheorem{corollary}[theorem]{Corollary}
\theoremstyle{remark}
\newtheorem{remark}[theorem]{Remark}
\theoremstyle{definition}
\numberwithin{equation}{section}
\newcommand{\R}{\ensuremath{\mathbb{R}}}
\newcommand{\N}{\ensuremath{\mathbb{N}}}
\newcommand{\dd}{\mathrm{d}}
\newcommand{\dx}{\dd x}
\newcommand{\dy}{\dd y}
\newcommand{\defeq}{\coloneq}
\newcommand{\triangulation}{\mathcal T_h}
\newcommand{\plinear}[1]{\mathrm{PL}(#1)}
\newcommand{\dist}{\mathrm{dist}}
\newcommand{\dOmega}{d_\Omega}
\newcommand{\Rd}{{\mathbb{R}^d}}
\newcommand{\triplenorm}[1]{%
  \left|\!\left|\!\left| #1 \right|\!\right|\!\right|%
}
\newcommand{\subjectclassification}[1]{

	{\small\textbf{\textit{AMS Subject Classification --- }} #1}

}
\newcommand{\keywords}[1]{

	{\small\textbf{\textit{Keywords --- }} #1}

}
\begin{document}

\title{Finite Elements with weighted bases for the fractional Laplacian
}

\author{F\'elix del Teso%
    \thanks{Departamento de Matemáticas, Universidad Autónoma de Madrid. 28049 Madrid (Spain). \email{felix.delteso@uam.es}} %
    \and Stefano Fronzoni%
    \thanks{Mathematical Institute, University of Oxford, OX2 6GG Oxford, United Kingdom, \email{fronzoni@maths.ox.ac.uk}} %
    \and%
    David Gómez-Castro%
    \thanks{Departamento de Matemáticas, Universidad Autónoma de Madrid. 28049 Madrid (Spain). \email{david.gomezcastro@uam.es}}
    \thanks{Instituto de Ciencias Matemáticas, Consejo Superior de Investigaciones Científicas. 28049 Madrid (Spain)}
}

\maketitle

\begin{abstract}\noindent
    This work presents a numerical study of the Dirichlet problem for the fractional Laplacian $(-\Delta)^s$ with $s\in(0,1)$ using Finite Element methods with non-standard bases. Classical approaches based on piece-wise linear basis yield $h^{\frac 1 2}$ convergence rates in the Sobolev–Slobodeckij norm $H^s$ due to the limited boundary regularity of the solution $u(x)$, which behaves like $\textup{dist}(x,\mathbb{R}^d\setminus \Omega)^s$, where $h$ is the diameter of the mesh elements. To overcome this limitation, we propose a novel Finite Element basis of the form $\delta^s~\times~($piece-wise linear functions$)$, where $\delta$ is any suitably smooth approximation of $\textup{dist}(x,\mathbb{R}^d\setminus \Omega)$. This exploits the improved regularity of $u/\delta^s$, achieving higher convergence rates. Under standard smoothness assumptions the method attains an order $h^{2-s}$ on quasi-uniform meshes, improving the rates with the piece-wise linear basis. We provide a rigorous theoretical error analysis with explicit rates and validate it through numerical experiments.
\end{abstract}

\subjectclassification{35K55; 35R11; 65N30.}
\keywords{integral fractional Laplacian; fractional Poisson equation; Finite Element method; weighted basis.}

\tableofcontents

\newpage

\section{Introduction and main results}

The objective of this paper is to perform a numerical study, using finite element methods with non-standard bases, of the following Dirichlet problem associated with the fractional Laplacian:
\begin{align}\label{eq:fracDirProb}
    \left\{
    \begin{aligned}
        (-\Delta)^s u(x) & = f(x), &  & x \in \Omega,                        \\
        u(x)             & = 0,    &  & x \in \mathbb{R}^d \setminus \Omega,
    \end{aligned}\right.
\end{align}
with $\Omega$ a bounded open subset of $\R^d$, $f\in L^\infty(\R^d)$, and $(-\Delta)^s$ denoting the fractional Laplacian of order $s\in(0,1)$ and given by
\begin{equation} \label{eq:FracLapInt}
    (-\Delta)^s u(x) = C_{d,s} \textrm{P.V.} \int_{\mathbb{R}^d} \frac{u(x)-u(y)}{|x-y|^{d+2s}} \dy, \quad \textup{with } C_{d,s} = \frac{4^s \Gamma(d/2 + s)}{\pi^{d/2} |\Gamma(-s)|}.
\end{equation}

Finite element methods for problem \eqref{eq:fracDirProb} have been studied using piece-wise linear basis functions on polygonal domains (see, for instance, Acosta \& Borthagaray \cite{Acosta2017784, Borthagaray2017}). The reported convergence rates for these methods, in $s$-fractional Sobolev-Slobodeckij norm, are at best approximately $h^{\frac{1}{2}}$, where $h$ is the diameter of the mesh triangles.
The sublinear convergence rates are due to the regularity of the solution $u$, which is at most $u\in H^{s+\frac{1}{2}-\varepsilon}(\Omega)$ for small $\varepsilon>0$ (we refer the reader to Proposition 3.6 and Proposition 3.11 in \cite{Borthagaray2017}). This reduced regularity arises from the fact that, even in smooth domains, the solution $u$ behaves like $d_\Omega^s$ in a neighbourhood of the boundary $\partial \Omega$, where $\dOmega(x) = \dist(x, \R^d\setminus \Omega)$. We refer to \cite{AbatangeloRos-Oton2020} for a recent result of this type and Section 2.7 in \cite{fernandez-realIntegroDifferentialEllipticEquations2024} for a summary of the related regularity results. The low regularity of $u$ makes higher convergence rates difficult to obtain using a standard piece-wise linear basis that is normally used for solutions that are at least weakly differentiable once.
In \cite{Bonito2019}, the authors point out that when \(u\) is sufficiently regular and linear up to the boundary (e.g., \(u(x) = (1 - |x|^2)_+\) with \(\Omega = B_1\)), the higher-order convergence rate \(h^{\frac{3}{2} - s}\) in the \(s\)-fractional Sobolev–Slobodeckij norm can be achieved using piecewise linear basis functions. It is worth noting that such functions \(u\) correspond to solutions of problems with a sign-changing right-hand side \(f\) (see further discussion in \Cref{sec:extOp}).

\medskip

Our approach builds on the observation that, intuitively, if $\delta$ denotes a smooth approximation of the distance function $d_\Omega$, then the quotient $u/\delta^s$ exhibits significantly better regularity up to the boundary, provided that both $f$ and the domain $\Omega$ are sufficiently smooth. This regularity theory has been developed through a series of contributions
\cite{RosOtonSerra2014, Grubb2015, RosOton2016, RosOton2016_2, RosOtonSerra2017, AbatangeloRos-Oton2020, Grubb2023} (see \cite{fernandez-realIntegroDifferentialEllipticEquations2024} for a good survey on the matter).
Motivated by this fact, we construct a tailored finite element basis that captures the improved regularity of $u/\delta^s$, thereby achieving higher-order convergence rates than those available in the existing literature.
The idea, that will be detailed in \eqref{eq:Vh}, is to use a basis of the form $\delta^s \times ($piece-wise linear functions$)$.
One of the key advantages of the finite element method is its effectiveness in approximating smooth functions and our method takes advantage of this well-known fact in order to approximate $u/\delta^s$ and subsequently, knowing $\delta^s$, the solution $u$ itself. This motivates our strategy of approximating $u/\delta^s$ rather than $u$ directly.
The advantage of this approach is that we can obtain improved convergence rates, without requiring higher regularity of $u$, but rather $f$ and $\Omega$ sufficiently smooth, which is often the case in applications.  Assuming enough regularity on the right-hand side $f$ and the domain $\Omega$, we obtain convergence rates of order $h^{2-s}$ in $s$-fractional Sobolev-Slobodeckij norm, improving all the previously reported results in the literature.
Using Aubin-Nitsche duality, we obtain $L^2$-rates of higher than our $H^s$-rates.
At the end of the paper we provide numerical experiments that precisely match the $H^s$-rate of order $h^{2-s}$, and that suggest the $L^2$-rate is of order $h^2$.

\subsection{Preliminaries and functional setting}
In this section we introduce the functional spaces, the weak formulation of the fractional Dirichlet problem, as well as the finite element formulation of our numerical scheme with a weighted basis.

\paragraph{Sobolev and  Hölder  spaces.} We denote by $L^p(\Omega)$, for $p \in [1,\infty]$, the standard Lebesgue spaces endowed with the norm $\|\cdot\|_{L^{p}(\Omega)}$.
We recall the fractional Sobolev-Slobodeckij spaces of order $s \in (0,1)$, where we will denote the bilinear form
\begin{equation} \label{eq:BinFormFracLap}
    a(u,v) \defeq \frac{C_{d,s}}{2}
    \iint_{\mathbb{R}^d \times \mathbb{R}^d}
    \frac{(u(x) - u(y))(v(x) - v(y))}{|x-y|^{d+2s}}
    \, \dx \, \dy,
\end{equation}
and the associated seminorm $
    [w]_{H^{s}(\mathbb{R}^d)} \defeq a(w,w)^{1/2}$. Accordingly, for $s \in (0,1)$ we define
\[
    H^s(\mathbb{R}^d) \defeq
    \left\{ w \in L^2(\mathbb{R}^d) : [w]_{H^{s}(\mathbb{R}^d)} < \infty \right\},
\]
endowed with the norm
$\|w\|_{H^{s}(\mathbb{R}^d)} \defeq
    \|w\|_{L^2(\mathbb{R}^d)} + [w]_{H^{s}(\mathbb{R}^d)}$.
We also introduce the corresponding energy space
\begin{equation*}
    V \defeq \{ w \in H^s(\Rd) \text{ such that } w = 0 \text{ in } \Rd \setminus \Omega \},
\end{equation*}
which is sometimes denoted by \(\widetilde H^s(\Omega)\) in the literature.
\begin{remark}
    It is standard to define the trace spaces for \( s \in (0,1) \) as $H^s_0(\Omega) \defeq
        \overline{C_c^\infty(\Omega)}^{H^s}$.
    When \(\partial \Omega \in C^{0,1}\) and \(s \ne \tfrac{1}{2}\), we have the alternative characterization $V = H^s_0(\Omega)$ (see, e.g., \cite{hitchhiker2012}).
    In the case \(s = \tfrac{1}{2}\), the space \(V\) coincides with the so-called Lions–Magenes space \(H^{1/2}_{00}(\Omega)\).
\end{remark}
The well-known fractional Poincaré inequality
\begin{equation} \label{eq:fractional Poincare}
    \| u \|_{L^2(\Omega)} \le C [u]_{H^s(\Rd)}, \qquad \forall u \in V,
\end{equation}
ensures that the seminorm \([\cdot]_{H^s(\R^d)}\) is in fact an equivalent norm on \(V\). For further details, we refer the reader to \cite{hitchhiker2012, David2021}.
Finally, for $\alpha>0$ we denote by $W^{\alpha, \infty}(\Omega)$ the Hölder space of order $\alpha$
\begin{equation*}
    W^{\alpha, \infty} (\Omega) \defeq C^{\lceil \alpha \rceil - 1, \alpha - \lceil \alpha \rceil + 1} (\overline \Omega).
\end{equation*}
We recall that Morrey's inequality guarantees for all $\alpha> 0$ we have  $H^{\alpha+\frac{d}{2}}(\R^d) \subset W^{\alpha,\infty}_{\text{loc}}(\Rd)$.

\paragraph{Weak formulation of the fractional Dirichlet problem.}
We point out that we have the integration by parts formula
\begin{equation*}
    \int_\Rd \phi(x) (-\Delta)^s \psi(x)\dx = a(\phi,\psi) \qquad \forall \phi, \psi \in C^\infty_c(\Rd).
\end{equation*}
Problem \eqref{eq:fracDirProb} can be written in weak form as follows:
\begin{equation} \label{eq:main_prob_weak}
    \text{Find } u \in V \text{ such that }
    a(u, v) = \int_{\Omega} f(x) v(x) \dx \quad \forall v \in V.
\end{equation}

\paragraph{Regularity of the domain and regularized distance functions.}

Given the distance function  $\dOmega(x) = \dist(x, \R^d\setminus \Omega)$ for $x\in \R^d$, we consider modified distance functions $\delta$ satisfying for some  $c, c_j > 0$:
\begin{equation}
    \label{as:delta}\tag{$\textup{A}_{\delta}^{\sigma}$}
    \begin{aligned}
         & \delta \in C^\infty(\Omega) \cap W^{\sigma,\infty}(\Omega) \textup{ with }
        \tfrac{1}{c} d_\Omega \leq \delta \leq c\, d_\Omega \textup{ in } \Rd \textup{ and}
        \\
         & \textup{$|D^j \delta| \leq c_j \delta^{\sigma - |j|}$ in  $\Omega$ for all multi-indices $j$ with $|j| > \sigma$}.
    \end{aligned}
\end{equation}
\begin{remark}
    The existence of a function $\delta$ satisfying \eqref{as:delta} can be ensured under regularity assumptions on the domain $\Omega$. More precisely,
    a natural choice in general domains can be found in Lemma A.2 in \cite{AbatangeloRos-Oton2020} under the assumption $\sigma>1$ and $\sigma \not\in \N$ and $\partial \Omega\in C^{k,t}$ with $k\in \N$, $t\in(0,1)$ and $k+t=\sigma$; with these hypotheses the function $\delta$ is shown to be obtained as the solution of
    \begin{align}\label{eq:distanceeq}
        \left\{
        \begin{aligned}
            -\Delta \delta(x) & = 1, &  & x \in \Omega,          \\
            \delta(x)         & = 0, &  & x \in \partial \Omega.
        \end{aligned}\right.
    \end{align}
    In some scenarios, there are simpler ways of obtaining a suitable function $\delta$ rather than solving \eqref{eq:distanceeq}. For example, if $\Omega=B_R(x_0)$, we can choose $\delta(x) = R^2 - |x-x_0|^2$ or $\delta(x)=R^4-|x-x_0|^4$. For reasons we will discuss below, we will often take the latter for the numerical experiments.
\end{remark}

\paragraph{Generic regularity of the solution to the fractional Dirichlet problem.}
Using super-solution arguments, it was shown (Lemma 2.6 \cite{RosOtonSerra2014}) that the solution $u$ of problem \eqref{eq:fracDirProb} satisfies the following estimate:
\begin{equation}
    \label{eq:u like ds if f bounded}
    |u(x)| \le C \|f\|_{L^\infty(\Omega)} \, \dOmega(x)^s , \qquad \forall f \in L^\infty (\Omega).
\end{equation}
Moreover, explicit estimates for the Green kernel (see \cite{ChenSong1998}) yield
\begin{equation}\label{eq:fpositive}
    u(x) \ge c \, \dOmega(x)^s \int_\Omega f(y) \, \dOmega(y)^s \, \dy, \qquad \forall f \ge 0.
\end{equation}
Thus, $u$ behaves like $d_{\Omega}^s$ for nonnegative and bounded $f$.
In \cite{RosOtonSerra2014}, it was proved that if \(f \in C^\alpha(\overline{\Omega})\) with small \(\alpha>0\), then
\(
u / \dOmega^s \in C^{\alpha + s}(\overline{\Omega}).
\)
Similar result for arbitrary $\alpha>0$ where later established.
In particular, Theorems 1.3 and 1.4 of \cite{AbatangeloRos-Oton2020} proved the following higher regularity result, valid for a regularized distance \(\delta\) satisfying \eqref{as:delta} with \(\sigma = \gamma + 1\):
\begin{equation}
    \label{eq:AbatangeloRosOton results}
    \partial \Omega \in C^{\gamma+1}, \ f \in C^{(\gamma -s)_+} (\overline{\Omega})
    \ \Longrightarrow \
    \frac{u}{\delta^s} \in W^{\gamma,\infty}(\Omega),  \qquad \forall \gamma > 0 \text{ such that } \gamma, \gamma \pm s \notin \mathbb N ,
\end{equation}
for all \(\gamma > 0\) such that \(\gamma, \gamma \pm s \notin \mathbb N\).
We also mention \cite{Grubb2015, Grubb2023} where it is proved that if $\Omega \in C^\infty$ and $\delta, f \in C^\infty(\overline{\Omega})$ then $u / \delta^s \in C^\infty(\overline{\Omega})$.

\begin{remark}[Explicit solution for \(f = 1\) and \(\Omega\) a ball]
    \label{rem:f=1 and Omega ball}
    An explicit solution to
    \eqref{eq:fracDirProb} when $\Omega = B_R(x_0)$ and $f=1$ (see \cite{Getoor1961, Bogdan2010}) is given by
    \begin{equation} \label{eq:explicitSol}
        u^{\ast}(x) = c_{d,s} \left(R^2 - |x-x_0|^2\right)^s, \qquad \text{where } c_{d,s} = \frac{\Gamma(d/2)}{2^{2s} \Gamma((d+2s)/2) \Gamma(1+s)}.
    \end{equation}
    This example illustrates the high regularity of \(u / \dOmega^s\).
    We will employ this explicit solution in our numerical experiments.
\end{remark}

\normalcolor

\paragraph{Over-triangulations.}
Given $\Omega$ with possibly smooth boundary
let us consider a sequence $(\triangulation)_{h > 0}$ where each $\triangulation$ is a set of simplices  (i.e., intervals if $d=1$, triangles if $d=2$, and tetrahedra if $d=3$) that meshes a slightly larger domain $\Omega_h$ and that satisfies the following properties:
\begin{itemize}
    \item $\overline \Omega \subset \bigcup_{K \in \triangulation} \overline K$
          and for $K \in \triangulation$ we have $\overline K \cap \overline \Omega \ne \emptyset$.
          We define
          \[
              \Omega_h \defeq \operatorname{interior} \left( \bigcup_{K \in \triangulation} \overline K \right);
          \]
    \item for $K \in \triangulation$, we denote  by $h_K \defeq \operatorname{diam}(K)$ and $\xi_K$ the diameter of the largest sphere inscribed in $K$. We assume that there exists a constant $\sigma>0$ such that $\max_{K \in \mathcal{T}_h} \frac{h_K}{\xi_k} \leq \sigma$. In this case, we say that the triangulation is \textit{shape regular};
    \item there are constants $c_1, c_2 > 0$ and there exists $h>0$, such that $c_1 h \leq h_K \leq c_2 h$ for any $K \in \mathcal{T}_h$. We say that the triangulation is \textit{uniform};
    \item any face of any simplex $K \in \triangulation$ is either a subset of the boundary $\partial \Omega$ or a face for another simplex $L \in \triangulation$. We say that such a triangulation is \textit{conforming}.
\end{itemize}
We call this set of simplices an ``over-triangulation'' of $\Omega$.
Given an over-triangulation $\triangulation$ we define
\begin{equation*}
    \plinear \triangulation
    \defeq
    \{ u \in C(\overline \Omega_h) :
    u|_K \text{ is linear for all } K \in \triangulation\}.
\end{equation*}
Notice, when $d > 1$, if $\Omega$ is regular then $\Omega \subsetneq \Omega_h$.
See \Cref{fig:overtriang}.
Let us also introduce the notation
\begin{equation*}
    \delta^s \plinear{\triangulation} \defeq \{ \delta^s \varphi : \varphi\in \plinear{\triangulation}\}.
\end{equation*}
\begin{remark}
    Since we consider smooth domains \(\Omega\) and piecewise linear basis functions defined over simplices, one must either over-mesh or under-mesh the domain. As the multiplication by \(\delta^s\) disregards any information outside \(\Omega\), it is technically more convenient to over-mesh.
\end{remark}

\begin{figure}[!ht]
    \centering
    \includegraphics[width = 0.25\textwidth]{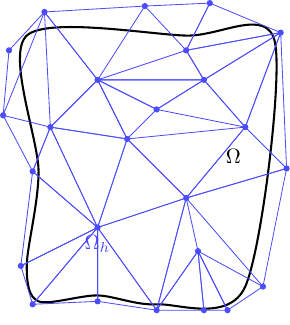}
    \caption{Example of over-triangulation.}
    \label{fig:overtriang}
\end{figure}

\paragraph{Weighted elements.} We want to set up a finite element method for problem \eqref{eq:main_prob_weak}. Given a regularization $\delta$ of the distance to the boundary function $d_\Omega$ (see \eqref{as:delta}),
and an over-triangulation $\mathcal T_h$, we consider the finite dimensional subspace of $H^{s}_0(\Omega)$ given by
\begin{equation} \label{eq:Vh}
    V_h \defeq \{v : \Rd \to \mathbb R \text{ such that } v |_\Omega \in \delta^s \plinear{\triangulation} \text{ and } v = 0 \text{ in } \Rd \setminus \Omega\} .
\end{equation}
Let us also define $N_h \coloneq \dim V_h$.
For the remaining elements, we drop the dependence on $h$ for the sake of the readability.
We define $\{ x_i \}_{i=1}^{N_h}$ be the vertices of all simplexes $K \in \triangulation$ (including those lying in $\partial \Omega_h$). The canonical basis of $\plinear \triangulation$ is given by the piece-wise linear function $\varphi_i$ such that $\varphi_i(x_j) = \delta_{ij}$. The canonical basis of $V_h$ is $\phi_i = \delta^s \varphi_i$.
When we use this basis, we will refer to the weighted Finite-Elements method, or WFEM for short.
\begin{remark}
    Notice $u / \delta^s$ may not vanish on $\partial \Omega$, so we need to include the nodes $x_i \in \partial \Omega_h$.
\end{remark}

The most common FEM approach for the Dirichlet problem considers on exactly-meshable domains $\Omega = \Omega_h$ (see, e.g., \cite{Borthagaray2017}).
In that case $\partial \Omega$ no better than Lipschitz, and this can be written in our notation precisely as $\Omega = \Omega_h$.
The standard approximation space is $V_h^{\text{FEM}} \defeq \plinear \triangulation \cap W_0^{1,\infty}(\Omega)$.
The canonical basis of this space is $\{\varphi_i : x_i \in \Omega\}$.

In order to visualize this difference, let us discuss the case $\Omega = (-1,1)$, and consider $h > 0$.
We denote the points $x_i = -1 + (i-1)h$ for $i = 1, \cdots, N_h$.
We can write the basis for standard FEM for the Dirichlet problem as
\begin{equation*}
    V_h^{\textup{FEM}} = \operatorname{span} \{\varphi_2, \cdots, \varphi_{N_h-1}\}.
\end{equation*}
The reader may see this as picking the basis $\phi_i^{\text{FEM}} = \varphi_{i+1}$ for $i = 1, \cdots, N_h - 2$.
The WFEM approximation space is
\begin{equation*}
    V_h = \operatorname{span} \{\delta^s \varphi_1, \cdots, \delta^s \varphi_{N_h}\}.
\end{equation*}
Notice that, can be seen as $\delta^s$ multiplying the basis for the classical Neumann problem.
We show the comparison in \Cref{fig: comparison profiles 1d}.
\begin{remark}
    Close to the boundary, we have that $\phi_1^{\textup{WFEM}}\sim \dOmega^s$ and $\phi_2^{\textup{WFEM}}\sim \dOmega^{1+s}$.
\end{remark}

\begin{figure}[t]
    \centering
    \begin{subfigure}{0.32 \textwidth}
        \centering
        \includegraphics[width = \textwidth]{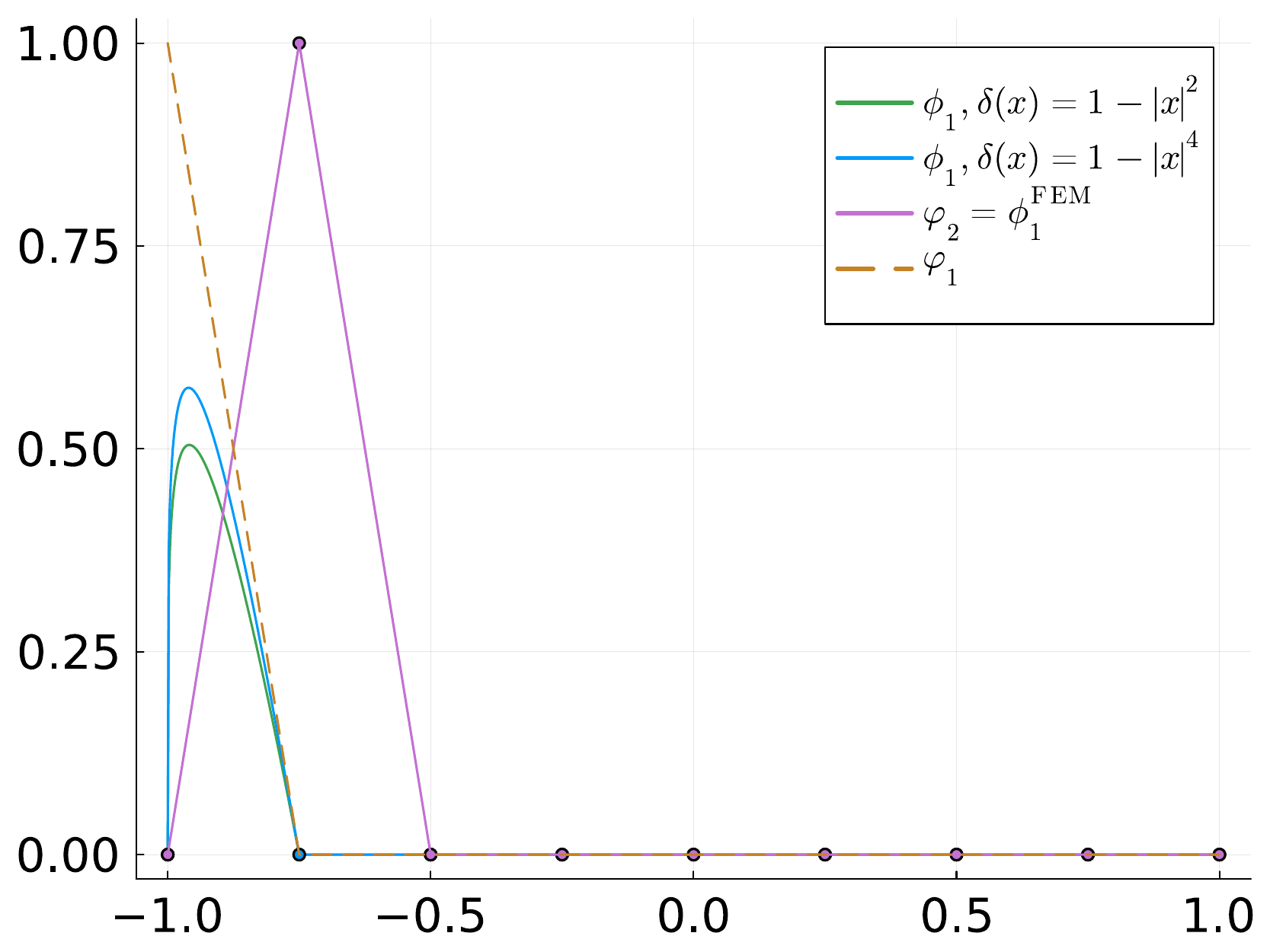}
        \caption{first basis element for FEM and WFEM}
    \end{subfigure}
    \hfill
    \begin{subfigure}{0.32\textwidth}
        \centering
        \includegraphics[width = \textwidth]{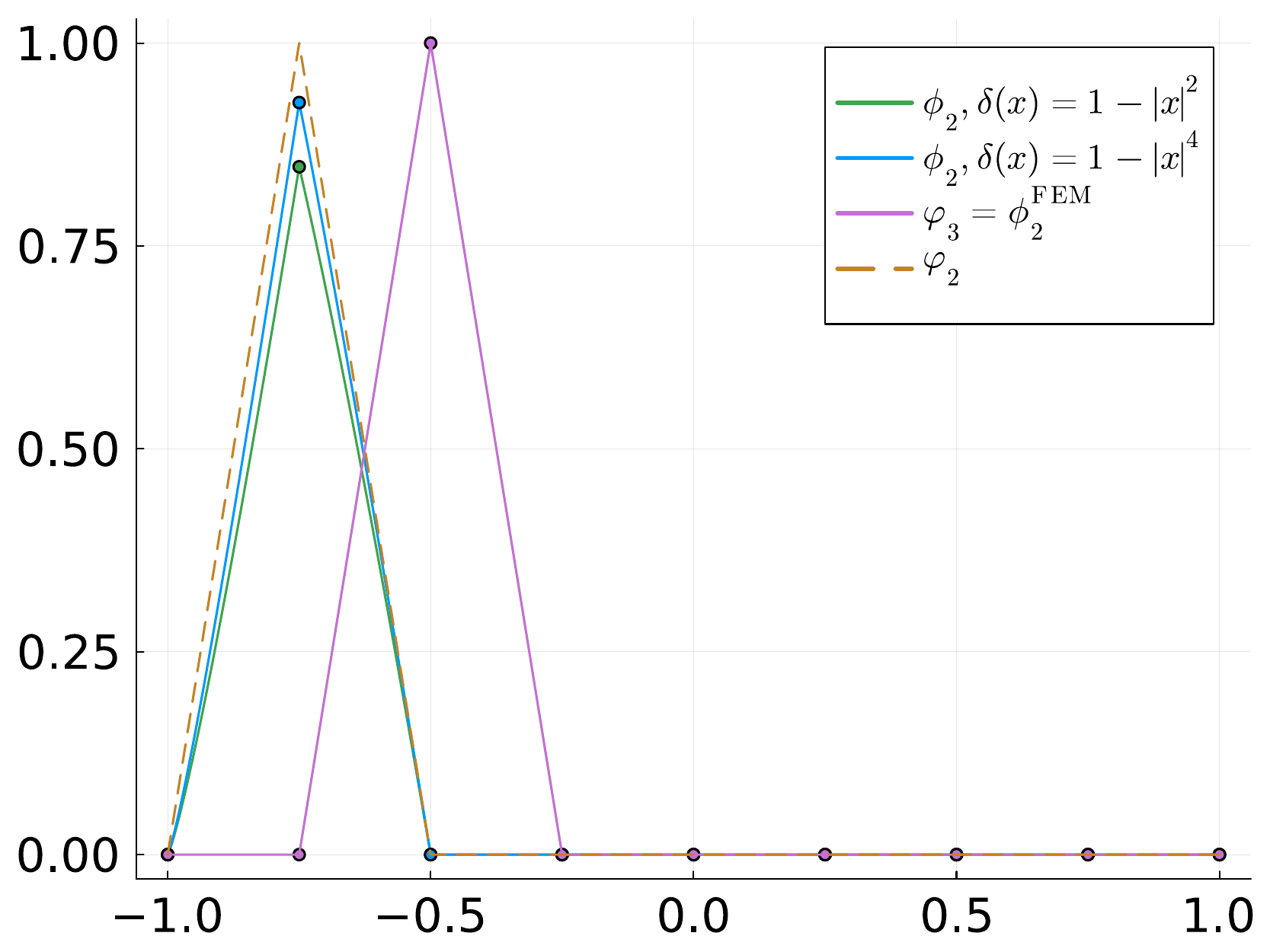}
        \caption{second basis element for FEM and WFEM.
        }
    \end{subfigure}
    \hfill
    \begin{subfigure}{0.32\textwidth}
        \centering
        \includegraphics[width = \textwidth]{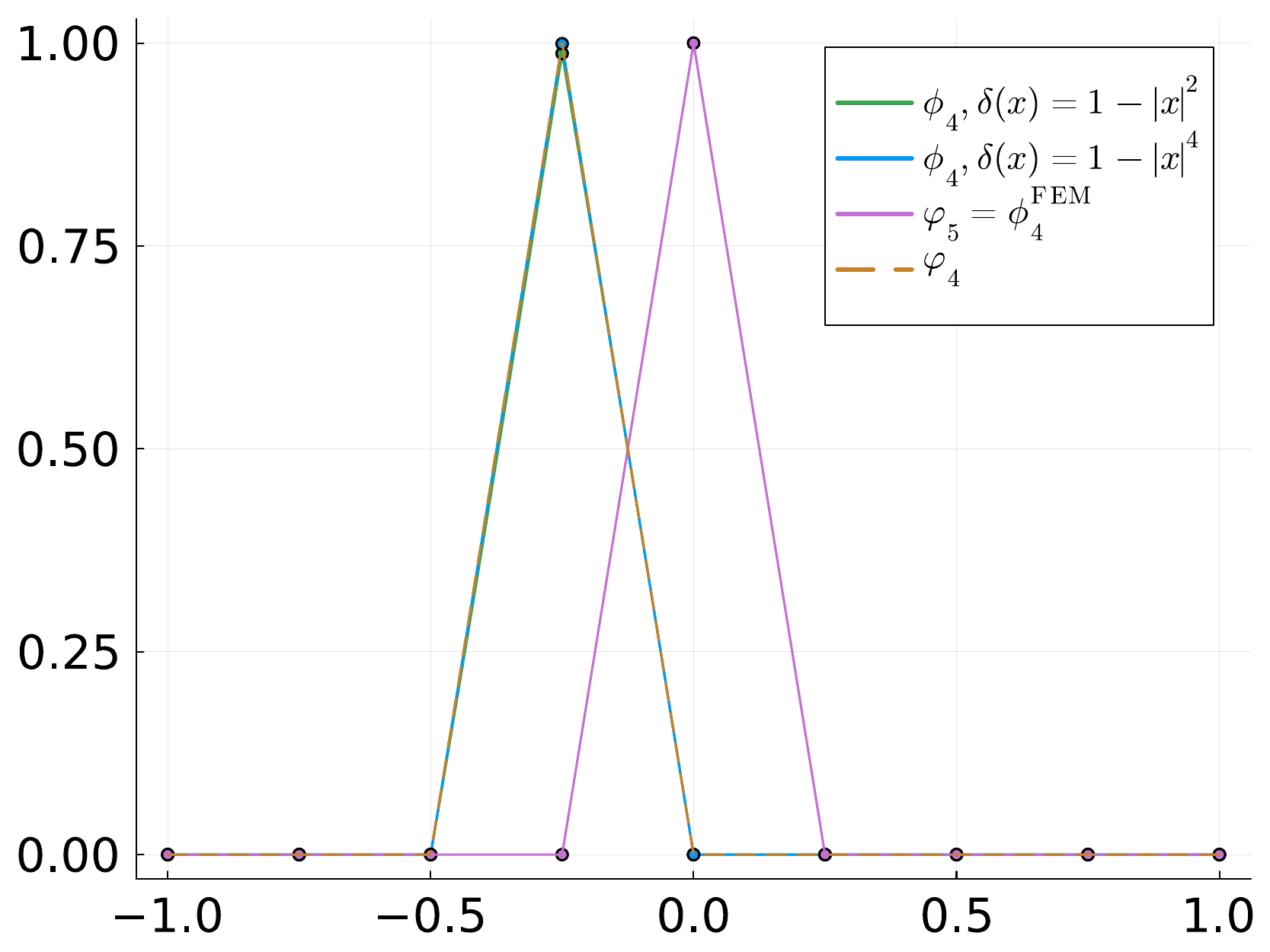}
        \caption{middle basis element for FEM and WFEM}
    \end{subfigure}
    \caption{FEM vs. WFEM basis function on $\Omega=(-1,1)$ for $s=0.2$ and $h=0.25$\normalcolor}
    \label{fig: comparison profiles 1d}
\end{figure}

In order to develop FEM theory, we show that $a(\cdot, \cdot)$ given by \eqref{eq:BinFormFracLap} is a continuous bilinear form on $V_h$.
The proof of the previous result will be presented later in \Cref{sec:proofs}.

\begin{lemma}\label{lem:subspace}
    Let $s\in(0,1)$, $\partial\Omega\in C^{1,1}$ and $\delta$ satisfy \eqref{as:delta} with $\sigma=1$.  Then $V_h \subset V$.
\end{lemma}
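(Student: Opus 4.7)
The statement asserts that any $\phi = \delta^s \varphi \in V_h$ (extended by zero outside $\Omega$) lies in $V$. Since $\phi \equiv 0$ in $\R^d \setminus \Omega$ by construction, and $|\phi| \le \|\delta\|_{L^\infty(\Omega)}^s\,\|\varphi\|_{L^\infty(\overline{\Omega_h})}$ on the bounded set $\Omega$, membership in $L^2(\R^d)$ is immediate. The plan is therefore to bound $[\phi]_{H^s(\R^d)}^2$ by splitting the Gagliardo double integral into the interior-interior contribution $\Omega \times \Omega$ and the symmetric cross contribution $\Omega \times \Omega^c$ (the exterior-exterior piece vanishing trivially) and to estimate each separately.

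For the cross contribution, $\phi(y) = 0$ on $\Omega^c$, so one needs only bound $\int_\Omega \phi(x)^2 \bigl(\int_{\Omega^c} |x-y|^{-d-2s}\,\dy\bigr)\,\dx$. The classical geometric estimate $\int_{\Omega^c} |x-y|^{-d-2s}\,\dy \lesssim \dOmega(x)^{-2s}$, combined with $|\phi(x)|^2 \le \|\varphi\|_{L^\infty}^2 \delta(x)^{2s} \lesssim \|\varphi\|_{L^\infty}^2 \dOmega(x)^{2s}$ from \eqref{as:delta}, yields a bound of order $\|\varphi\|_{L^\infty}^2 |\Omega|$. For the interior contribution I split via the product rule,
\[
\delta^s(x)\varphi(x) - \delta^s(y)\varphi(y) = \delta^s(x)\bigl(\varphi(x)-\varphi(y)\bigr) + \varphi(y)\bigl(\delta^s(x)-\delta^s(y)\bigr),
\]
and treat the two summands independently. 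The first is dispatched using the global Lipschitz constant $L_h$ of the continuous piecewise-linear $\varphi$ on $\overline{\Omega_h}$, which produces an integrand of size at most $L_h^2\|\delta\|_{L^\infty}^{2s}|x-y|^{2-d-2s}$; this is integrable on the bounded set $\Omega \times \Omega$ because $s<1$.

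The main obstacle is the second summand, which (after bounding $\varphi$ by its sup-norm) reduces to establishing $[\delta^s]_{H^s(\Omega)} < \infty$. I plan a two-regime decomposition of the inner integral according to whether $|x-y| \le \eta\,\delta(x)$ or not, for a small fixed $\eta$ chosen so that $\delta(y) \sim \delta(x)$ in the near regime. In the near regime the whole segment lies in the interior and \eqref{as:delta} gives $|\nabla(\delta^s)| = s\delta^{s-1}|\nabla\delta| \lesssim \delta(x)^{s-1}$, so the mean-value theorem yields $|\delta^s(x)-\delta^s(y)|^2 \lesssim \delta(x)^{2s-2}|x-y|^2$; integrating the resulting density $\delta(x)^{2s-2}|x-y|^{2-d-2s}$ over $|x-y|\le\eta\delta(x)$ gives a constant independent of $x$ thanks to the cancellation between $\delta(x)^{2s-2}$ and the factor $\delta(x)^{2-2s}$ coming from the radial integration (this is where $s<1$ is essential). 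In the far regime, the global $s$-Hölder bound $|\delta^s(x)-\delta^s(y)| \lesssim |x-y|^s$, which follows from $\delta \in W^{1,\infty}(\Omega)$ via the elementary inequality $|a^s-b^s|\le|a-b|^s$, produces an integrand of order $|x-y|^{-d}$; its integral over $\eta\delta(x) \le |x-y| \le \operatorname{diam}(\Omega)$ is logarithmic in $\delta(x)$, and the outer integral $\int_\Omega |\log\delta(x)|\,\dx$ is finite since $\delta \sim \dOmega$ and $|\log \dOmega|$ is locally integrable on any bounded domain. Summing all pieces concludes the proof.
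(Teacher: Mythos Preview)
Your proof is correct and shares the paper's high-level strategy: split the Gagliardo integral into interior and cross parts, handle the cross term via $\int_{\Omega^c}|x-y|^{-d-2s}\,\dy \lesssim \dOmega(x)^{-2s}$, and reduce the interior term (after a product-rule splitting) to showing $[\delta^s]_{H^s(\Omega)}<\infty$. The execution differs in two respects. First, the paper isolates $\delta^s\in H^s(\R^d)$ as a separate lemma (\Cref{lem:deltas in Hs0}) and then treats the piecewise-linear factor through an extension operator and the interpolation bound $\|E_1 w\|_{H^s}\le C\|E_1 w\|_{L^2}^{1-s}\|E_1 w\|_{H^1}^s$; you instead use the Lipschitz constant of $\varphi$ on $\overline{\Omega_h}$ directly, which is more elementary and avoids the extension machinery. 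Second, for the interior seminorm of $\delta^s$ the paper does \emph{not} use a near/far split: it multiplies one copy each of the two pointwise bounds $|\delta^s(x)-\delta^s(y)|\lesssim \min\{\delta(x),\delta(y)\}^{s-1}|x-y|$ and $|\delta^s(x)-\delta^s(y)|\lesssim |x-y|^s$ to get the single integrand $\min\{\dOmega(x),\dOmega(y)\}^{s-1}|x-y|^{1-d-s}$, and then invokes $\int_\Omega \dOmega^{s-1}<\infty$ via the tubular neighbourhood theorem. Your near/far decomposition produces instead an outer integral of $|\log\dOmega|$, a strictly weaker integrability requirement. One minor caveat: the assertion that $|\log\dOmega|\in L^1(\Omega)$ for \emph{any} bounded domain is not obvious without some control on the boundary, but it certainly holds under the stated hypothesis $\partial\Omega\in C^{1,1}$ (already for Lipschitz $\partial\Omega$), so the argument goes through.
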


\paragraph{WFEM of the fractional Dirichlet problem.}
With this notation, we define the associated finite elements approximation problem as follows:
\begin{equation} \label{eq:main_prob_FE}
    \text{Find } u_h \in V_h \text{ such that }
    a(u_h, v_h) = \int_{\Omega} f(x) v_h(x) \dx \quad \forall v_h \in V_h.
\end{equation}
Since the bilinear form $a(\cdot, \cdot)$ is coercive
(see \eqref{eq:fractional Poincare}),
applying Lax-Milgram theorem yields well-posedness of \eqref{eq:main_prob_FE}.

\begin{remark}
    Due to \Cref{rem:f=1 and Omega ball}, if $\Omega = B_1$ and $f = 1$, and we pick the distance $\delta(x) = (1-|x|^2)_+$ we have precisely that $u / \delta^s$ is constant. This means that $u \in V_h$. Since $V_h \subset V$, then $u$ satisfies \eqref{eq:main_prob_FE}, and by uniqueness $u = u_h$. The numerical solution is exactly the \emph{true} solution.
\end{remark}

\subsection{Main results}

We present in this section the main results of our paper. They concern the convergence rate in $h$ for problem \eqref{eq:main_prob_FE} using weighted elements.  Firstly, in \Cref{thm:convergence rate Hs} we prove a general error estimate in $H^s(\mathbb{R}^d)$ seminorm, depending on the regularity of $u/\delta^s$ and the boundary of the domain $\partial \Omega$. Subsequently, we apply the result, under suitable regularity of the domain and the right-hand side $f$, to obtain \Cref{cor:convergence rate from regularity of f}.
Finally, in \Cref{cor:convergence rate in L2} we obtain an error estimate in the $L^2(\Omega)$ norm as well.

\subsubsection{Convergence rates in \texorpdfstring{$H^s$}{Hs}.}

We first formulate our convergence results in terms of the regularity of $u/\delta^s$.

\begin{theorem}
    \label{thm:convergence rate Hs}
    Let $s\in(0,1)$, $\partial \Omega\in C^{1,1}$, $\mu\in (s,2]$, and $\delta$ satisfy \eqref{as:delta} with $\sigma=\max\{1,\mu\}$.
    Let also $u$ be the corresponding solution of \eqref{eq:main_prob_weak}, assume that $u/\delta^s \in W^{\mu,\infty}({\Omega})$,
    and $V_h$ be given by \eqref{eq:Vh} and $u_h$ be the solution of \eqref{eq:main_prob_FE}. Then,
    we have that
    \begin{equation}\label{eq:Hs rate}
        [u - u_h]_{H^s (\R^d)} \le C h^{\mu-s}\lvert\log h\rvert^{\frac{1}{2}}  \|u/\delta^s\|_{W^{\mu,\infty}({\Omega})},
    \end{equation}
    where $C$ does not depend on $h$.
    In particular, the maximal order $h^{2-s}\lvert\log h\rvert^{\frac{1}{2}}$ is achieved if $\mu=2$, i.e., $u/\delta^s\in W^{2,\infty}(\Omega)$.
\end{theorem}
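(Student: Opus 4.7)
The natural strategy is C\'ea's lemma. By \Cref{lem:subspace}, $V_h\subset V$, so Galerkin orthogonality $a(u-u_h,v_h)=0$ for all $v_h\in V_h$ holds, and the bilinear form $a$ is coercive (via \eqref{eq:fractional Poincare}) and continuous on $V$ with respect to $[\cdot]_{H^s(\mathbb R^d)}$. This reduces the problem to an approximation estimate: $[u-u_h]_{H^s(\mathbb R^d)} \le C\inf_{v_h\in V_h}[u-v_h]_{H^s(\mathbb R^d)}$.

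For the approximant, set $w\defeq u/\delta^s\in W^{\mu,\infty}(\Omega)$ and use $\partial\Omega\in C^{1,1}$ together with $\mu\le 2$ to extend $w$ to some $\tilde w\in W^{\mu,\infty}(\mathbb R^d)$ of comparable norm. Let $I_h\tilde w\in\plinear{\triangulation}$ be the nodal piecewise-linear interpolant on the over-triangulation and define $v_h\defeq \delta^s\,I_h\tilde w$, which lies in $V_h$ because $\delta^s$ vanishes outside $\Omega$. Writing $g\defeq \tilde w-I_h\tilde w$, classical local interpolation yields
\begin{equation*}
\|g\|_{L^\infty(\Omega_h)}\le Ch^\mu\|w\|_{W^{\mu,\infty}(\Omega)},\qquad \|\nabla g\|_{L^\infty(\Omega_h)}\le Ch^{\mu-1}\|w\|_{W^{\mu,\infty}(\Omega)},
\end{equation*}
when $\mu\in[1,2]$, with the H\"older analogue $|g(x)-g(y)|\le C|x-y|^\mu\|w\|_{W^{\mu,\infty}}$ for $\mu\in(s,1)$. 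The task is now to control $[\delta^s g]_{H^s(\mathbb R^d)}$.

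Since $\delta^s g\equiv 0$ outside $\Omega$, I split the double integral into inside--outside and inside--inside parts. The inside--outside contribution reduces, via $\int_{\mathbb R^d\setminus\Omega}|x-y|^{-d-2s}\dy\le Cd_\Omega(x)^{-2s}$ and $\delta(x)^s\le Cd_\Omega(x)^s$, to $C\|g\|_{L^2(\Omega)}^2\le Ch^{2\mu}$, which is absorbed into the target rate. For the inside--inside part I apply the product identity
\begin{equation*}
(\delta^s g)(x)-(\delta^s g)(y) = \delta(x)^s\bigl(g(x)-g(y)\bigr) + g(y)\bigl(\delta(x)^s-\delta(y)^s\bigr),
\end{equation*}
and estimate the two resulting integrals separately. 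The first, weighted by $\delta(x)^{2s}\le C$, is essentially $[g]_{H^s(\Omega)}^2$, and one controls it by the classical split $|x-y|\le h$ (Lipschitz/H\"older bound on $g$) versus $|x-y|>h$ (use $|g|\le Ch^\mu$ and integrate the tail $|x-y|^{-d-2s}$); both halves yield $Ch^{2(\mu-s)}\|w\|_{W^{\mu,\infty}}^2$.

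The remaining integral, $\iint_{\Omega\times\Omega} g(y)^2 \frac{|\delta(x)^s-\delta(y)^s|^2}{|x-y|^{d+2s}}\dx\dy$, is the delicate one and is the source of the $|\log h|^{1/2}$ factor. For fixed $y$ I would split the $x$-integration at $|x-y|=\tfrac12\delta(y)$: on the near part $\delta\asymp\delta(y)$ gives $|\delta(x)^s-\delta(y)^s|\le C\delta(y)^{s-1}|x-y|$, producing an inner integral of $O(1)$; on the far part the subadditivity $|\delta(x)^s-\delta(y)^s|\le|x-y|^s$ leaves the borderline $\int|x-y|^{-d}\dx$, which yields a factor $C(1+|\log\delta(y)|)$. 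Integrating $g(y)^2(1+|\log\delta(y)|)$ over $\Omega$ through the boundary-layer decomposition $\{\delta<h\}\cup\{\delta\ge h\}$, balancing the uniform bound $|g|\le Ch^\mu$ against the truncated $\log\le C|\log h|$ on the bulk, is what produces the logarithm. This interaction between the barely-$H^s$ regularity of $\delta^s$ near $\partial\Omega$ and the pointwise error of $g$ is the main obstacle and is precisely what forces the logarithmic correction in the statement.
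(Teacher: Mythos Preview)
Your overall skeleton --- C\'ea's lemma plus the competitor $v_h=\delta^s I_h\tilde w$ --- is exactly what the paper does (their $J_h u$ in \eqref{eq:competitor}). The difference lies in how the bound on $[\delta^s g]_{H^s(\mathbb R^d)}$ is obtained.

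The paper does not carry out the direct double-integral computation you sketch. Instead it writes $u-J_hu=\delta^s\hat G$ with $\hat G$ a compactly supported $W^{\alpha,\infty}(\mathbb R^d)$ extension of $g$, applies the product inequality $[\delta^s\hat G]_{H^s}\le\|\delta^s\|_\infty[\hat G]_{H^s}+[\delta^s]_{H^s}\|\hat G\|_\infty$, and then controls $[\hat G]_{H^s}$ through the embedding $[\hat G]_{H^s}\le C(\alpha-s)^{-1/2}\|\hat G\|_{W^{\alpha,\infty}}$ (\Cref{lem:HsWalpha_inclusion}) combined with the interpolation error $\|\hat G\|_{W^{\alpha,\infty}(\Omega)}\le Ch^{\mu-\alpha}$ (\Cref{lem:interpolation}). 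This gives $C(\alpha-s)^{-1/2}h^{\mu-\alpha}$, and the factor $|\log h|^{1/2}$ arises from optimising $\alpha=s+(2|\log h|)^{-1}$. So in the paper's argument the logarithm is an artefact of passing through a single intermediate H\"older exponent, not of the $\delta^s$-interaction you single out.

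Your route is more hands-on and in fact sharper. Your first inside--inside term, handled via the two-scale split at $|x-y|=h$ using both $\|g\|_\infty\le Ch^\mu$ and $\|\nabla g\|_\infty\le Ch^{\mu-1}$ (resp.\ $[g]_{W^{\mu,\infty}}\le C$ when $\mu<1$), already yields $Ch^{2(\mu-s)}$ with no logarithm. Your second term is \emph{subdominant}: since $\int_\Omega|\log\delta|\,\dy<\infty$ on a $C^{1,1}$ domain, one gets $\int_\Omega g(y)^2(1+|\log\delta(y)|)\,\dy\le Ch^{2\mu}=o(h^{2(\mu-s)})$. Hence your argument actually proves \eqref{eq:Hs rate} without the $|\log h|^{1/2}$; the attribution of the logarithm in your last paragraph is misplaced, but the estimate itself is correct (indeed, slightly stronger than stated).
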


We can now apply the regularity result \eqref{eq:AbatangeloRosOton results} to obtain convergence rates in terms of the regularity of the right hand side $f$.

\begin{corollary}
    \label{cor:convergence rate from regularity of f}
    Let $s\in (0,1)$, $\gamma\in (s,3)$ such that $\gamma,\gamma\pm s\not\in\N $, $f\in W^{\gamma-s,\infty}(\Omega)$, $\partial \Omega\in C^{\gamma+1}\cap C^{1,1}$, and \eqref{as:delta} holds with $\sigma=\gamma+1$.
    Let $u$ be the corresponding solution of \eqref{eq:main_prob_weak},  $V_h$ be given by \eqref{eq:Vh}, and $u_h$ be the solution of \eqref{eq:main_prob_FE}.
    Then, we have that
    \begin{equation} \label{eq: estcorollary}
        [u - u_h]_{H^s (\R^d)} \le C h^{\min\{2-s,\gamma-s\}}\lvert\log h\rvert^{\frac{1}{2}}  \|f\|_{W^{\gamma-s,\infty}({\Omega})},
    \end{equation}
    where $C$ does not depend on $h$.
\end{corollary}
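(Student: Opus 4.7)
The plan is to deduce \Cref{cor:convergence rate from regularity of f} directly from \Cref{thm:convergence rate Hs} by feeding in the boundary-regularity statement \eqref{eq:AbatangeloRosOton results}. Concretely, I would set $\mu := \min\{\gamma, 2\}$. Since $\gamma \in (s, 3)$, we have $\mu \in (s, 2]$, which is exactly the admissible range in \Cref{thm:convergence rate Hs}; moreover $\mu - s = \min\{\gamma - s,\, 2 - s\}$, matching the exponent claimed in \eqref{eq: estcorollary}.

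Before invoking \Cref{thm:convergence rate Hs}, I would verify its hypotheses. The assumption $\partial\Omega\in C^{1,1}$ is part of the corollary's hypotheses, and the assumption on $\delta$ reduces to showing that \eqref{as:delta} with $\sigma = \gamma+1$ implies \eqref{as:delta} with $\sigma = \max\{1,\mu\} \le \gamma+1$. The embedding $W^{\gamma+1,\infty}(\Omega)\hookrightarrow W^{\max\{1,\mu\},\infty}(\Omega)$ takes care of the global regularity; in the intermediate range $\max\{1,\mu\}<|j|\le \gamma+1$, the bound $D^j\delta \in L^\infty(\Omega)$ combined with $\delta \le c\,\mathrm{diam}(\Omega)$ gives the pointwise estimate $|D^j\delta|\le c'_j\,\delta^{\max\{1,\mu\}-|j|}$ for free, while for $|j|>\gamma+1$ the bound is inherited directly from the hypothesis.

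The key step is then to invoke \eqref{eq:AbatangeloRosOton results} with exponent $\gamma$: under $\partial\Omega\in C^{\gamma+1}$, $f \in W^{\gamma-s,\infty}(\Omega)\subset C^{(\gamma-s)_+}(\overline\Omega)$, and $\gamma,\gamma\pm s\notin \mathbb{N}$, we obtain $u/\delta^s\in W^{\gamma,\infty}(\Omega)$ with the quantitative bound
\[
\|u/\delta^s\|_{W^{\gamma,\infty}(\Omega)}\le C\,\|f\|_{W^{\gamma-s,\infty}(\Omega)}.
\]
Composing with the inclusion $W^{\gamma,\infty}(\Omega)\hookrightarrow W^{\mu,\infty}(\Omega)$ (since $\mu\le\gamma$) and plugging into \eqref{eq:Hs rate} yields \eqref{eq: estcorollary} at once.

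The only delicate part of this program, in my view, is extracting the \emph{quantitative} norm estimate $\|u/\delta^s\|_{W^{\gamma,\infty}}\lesssim \|f\|_{W^{\gamma-s,\infty}}$: the statement of \eqref{eq:AbatangeloRosOton results} as cited is qualitative, and one must either trace constants through the proof of Theorems 1.3--1.4 in \cite{AbatangeloRos-Oton2020} or, more abstractly, apply the closed-graph theorem to the linear solution map $f\mapsto u/\delta^s$ between the appropriate Banach spaces. A minor secondary check is that the exclusion $\gamma,\gamma\pm s\notin \mathbb{N}$ is compatible with the truncation $\mu=\min\{\gamma,2\}$: when $\gamma\ge 2$ we simply take $\mu=2$, which \Cref{thm:convergence rate Hs} allows unconditionally; when $\gamma<2$ we take $\mu=\gamma\in(s,2)$, and no integer restriction on $\mu$ is required there.
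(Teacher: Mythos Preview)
Your proposal is correct and follows essentially the same route as the paper: set $\mu=\min\{\gamma,2\}$, invoke \eqref{eq:AbatangeloRosOton results} to obtain $u/\delta^s\in W^{\gamma,\infty}(\Omega)$, and apply \Cref{thm:convergence rate Hs}. The one point the paper handles more concretely than your ``closed-graph'' suggestion is the quantitative bound: the estimate from \cite{AbatangeloRos-Oton2020} actually reads $\|u/\delta^s\|_{W^{\gamma,\infty}(\Omega)}\le C(\|f\|_{W^{\gamma-s,\infty}(\Omega)}+\|u\|_{L^\infty(\mathbb{R}^d)})$, and the extra $\|u\|_{L^\infty}$ term is absorbed via the a priori bound $\|u\|_{W^{s,\infty}(\mathbb{R}^d)}\le C\|f\|_{L^\infty(\Omega)}$ from \cite[Proposition~1.1]{RosOtonSerra2014}.
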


Notice that in the results above, the constants $C$ depend on all the parameters in the statement $s, \mu$ (or $\gamma$), the geometric parameters $\Omega, d$, the constants in \eqref{as:delta}, and the uniform constants in the definition of the triangulation $\triangulation$.

\normalcolor
\subsubsection{\texorpdfstring{$L^2$}{L2}-rates via Aubin--Nitsche duality}

Since our $H^s$-convergence requires high regularity of $u$ in the Hölder scale, we cannot directly apply the standard Aubin-Nitsche argument. We provide an adapted argument that produces rates we do not expect to be optimal. For simplicity of the presentation, we only formulate and prove the result using the most restrictive assumption on the regularity of the domain. A close inspection to the proof could lead a less restrictive assumption.

\begin{theorem}
    \label{cor:convergence rate in L2}
    Let the hypotheses of \Cref{cor:convergence rate from regularity of f} hold.
    Additionally assume that $\partial \Omega \in C^{2,1}$.
    Then we have that
    \begin{equation*}
        \|u - u_h\|_{L^2(\R^d)} \le C h^{\min\{2-s, \gamma -s\} + \alpha} \lvert\log h\rvert^{\frac 3 4}
        \|f\|_{W^{\gamma-s,\infty}({\Omega})}
        \quad
        \text{for almost all } \alpha \in \left(
        0,
        s \tfrac{4-2s}{4+d}\right),
    \end{equation*}
    where $C$ does not depend on $h$.
\end{theorem}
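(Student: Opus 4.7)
The plan is to adapt the classical Aubin--Nitsche duality argument via a mollification trick that compensates for the low regularity of $\phi := u - u_h \in V$. The obstruction to the standard argument is that $\phi$ is only known to lie in $L^2 \cap H^s$, which is not enough regularity to apply \Cref{cor:convergence rate from regularity of f} directly to the dual problem with datum $\phi$. To circumvent this, I would introduce a standard smooth mollifier $\rho_\epsilon$ at scale $\epsilon > 0$ and set $\phi_\epsilon := \phi \ast \rho_\epsilon$, where $\phi$ is extended by zero outside $\Omega$ (harmless since $\phi \in V$ already vanishes there), leading to the splitting
\begin{equation*}
    \|\phi\|_{L^2(\R^d)}^2 = (\phi, \phi - \phi_\epsilon)_{L^2} + (\phi, \phi_\epsilon)_{L^2}.
\end{equation*}

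For the first summand, the classical mollifier estimate $\|\phi - \phi_\epsilon\|_{L^2} \le C \epsilon^s [\phi]_{H^s(\R^d)}$ combined with the $H^s$-rate from \Cref{cor:convergence rate from regularity of f} yields $|(\phi, \phi - \phi_\epsilon)_{L^2}| \le C \|\phi\|_{L^2} \epsilon^s h^\beta |\log h|^{1/2}$, with $\beta := \min\{2-s, \gamma-s\}$. For the second summand, I would introduce the dual solution $w_\epsilon \in V$ defined by $a(w_\epsilon, v) = (\phi_\epsilon, v)_{L^2(\Omega)}$ for all $v \in V$. Using Galerkin orthogonality for $u - u_h$ against the WFEM approximation $w_{\epsilon,h} \in V_h$ of $w_\epsilon$, one has $(\phi_\epsilon, \phi)_{L^2} = a(w_\epsilon - w_{\epsilon, h}, \phi)$, so that
\begin{equation*}
    |(\phi_\epsilon, \phi)_{L^2}| \le [w_\epsilon - w_{\epsilon, h}]_{H^s(\R^d)} \, [\phi]_{H^s(\R^d)}.
\end{equation*}
Applying \Cref{thm:convergence rate Hs} with $\mu = 2 - \eta$ for a small $\eta > 0$ (chosen so that $2 - \eta$ and $2-\eta\pm s$ avoid $\N$, which is permissible since $\partial\Omega \in C^{2,1}$ and \eqref{as:delta} is assumed with $\sigma = \gamma + 1 \ge 3 - \eta$), together with the regularity estimate \eqref{eq:AbatangeloRosOton results} applied to $w_\epsilon$ and the elementary Young-type bound $\|\phi_\epsilon\|_{W^{k,\infty}} \le C \epsilon^{-k - d/2}\|\phi\|_{L^2}$, produces
\begin{equation*}
    [w_\epsilon - w_{\epsilon, h}]_{H^s} \le C h^{2-\eta-s} |\log h|^{1/2} \epsilon^{-(2-\eta-s+d/2)} \|\phi\|_{L^2}.
\end{equation*}

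Putting both summands together and dividing through by $\|\phi\|_{L^2}$ yields
\begin{equation*}
    \|\phi\|_{L^2} \le C h^\beta |\log h|^{1/2} \left( \epsilon^s + \epsilon^{-(2-\eta-s+d/2)} h^{2-\eta-s} |\log h|^{1/2} \right).
\end{equation*}
Optimising in $\epsilon$ by equating the two summands gives the exponent
\begin{equation*}
    \beta + \frac{s(2-\eta-s)}{2-\eta + d/2} \;\xrightarrow{\eta \to 0^+}\; \beta + \frac{s(4-2s)}{4+d},
\end{equation*}
and a direct check shows that the resulting logarithmic factor can be crudely absorbed into $|\log h|^{3/4}$ for any $\alpha$ strictly below the threshold. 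Choosing $\eta > 0$ sufficiently small then delivers the stated bound.

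The main obstacle I anticipate is the bookkeeping around the parameter $\eta$: the non-integer constraints in \eqref{eq:AbatangeloRosOton results} prevent us from taking $\mu = 2$ exactly, which is precisely why the statement allows $\alpha$ only strictly below the threshold $s(4-2s)/(4+d)$ rather than attaining it. A secondary technicality is the precise form of the mollifier bounds used on $\phi_\epsilon$ (one could potentially improve the exponent by exploiting $[\phi]_{H^s}$ instead of $\|\phi\|_{L^2}$ in the Young inequality step, but this is not needed for the stated rate and is likely the reason the authors remark that the rate is not expected to be optimal).
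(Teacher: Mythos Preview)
Your proof is correct and reaches the stated exponent $s\tfrac{4-2s}{4+d}$, but the route differs from the paper's. Both arguments are ``regularize, then dualize'': the paper regularizes $e_h = u - u_h$ by applying the Bessel potential $(I-\Delta)^{-t}$ in Fourier space, which yields the exact identity $\triplenorm{(I-\Delta)^{-t}e_h}_{H^{s+2t}} = \triplenorm{e_h}_{H^s}$, passes to $W^{s+2t-d/2,\infty}$ via Morrey, applies \Cref{cor:convergence rate from regularity of f} to the dual problem, and finally interpolates $L^2$ between $H^{-t}$ and $H^s$. You instead regularize by convolution with a standard mollifier $\rho_\epsilon$, split $\|\phi\|_{L^2}^2$ directly, use the elementary estimates $\|\phi-\phi_\epsilon\|_{L^2}\lesssim \epsilon^s[\phi]_{H^s}$ and $\|\phi_\epsilon\|_{W^{k,\infty}}\lesssim \epsilon^{-k-d/2}\|\phi\|_{L^2}$, and optimize in $\epsilon$. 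Your approach is more elementary (no Fourier transform, no negative-order Sobolev norms), while the paper's gives cleaner intermediate identities; a direct computation shows both optimizations yield the same limiting exponent $\tfrac{s(2-s)}{2+d/2}=s\tfrac{4-2s}{4+d}$ as $\eta\to0$ (resp.\ $t\to \tfrac{4-4s+d}{4}$). One minor bookkeeping point: your claim that the hypothesis on $\delta$ gives $\sigma=\gamma+1\ge 3-\eta$ is only literally true when $\gamma\ge 2-\eta$; for smaller $\gamma$ one needs the monotonicity of \eqref{as:delta} in $\sigma$ together with the additional assumption $\partial\Omega\in C^{2,1}$, exactly as in the paper's argument.
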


\subsection{Comments on related literature}

We briefly review previous finite element methods proposed for problem~\eqref{eq:fracDirProb}. The main challenges associated with this problem stem from the limited regularity of its solution and the singularity of the kernel in~\eqref{eq:FracLapInt}. Several recent works have addressed these issues, aiming to achieve both fast convergence and computational efficiency.

From a theoretical perspective, Acosta and Borthagaray first established in~\cite{Borthagaray2017} that for a Lipschitz domain $\Omega$, the piece-wise linear finite element approximation $u_h$ of the weak solution $u$ to problem~\eqref{eq:fracDirProb} satisfies the following estimate (see Theorem~4.6 therein):
\begin{align} \label{eq:Acosta-Borthagaray error}
    \| u  - u_h \|_{H^s (\mathbb{R}^d)} \le
    \begin{cases}
        C\, h^{1/2} \lvert\log h\rvert\, \| f \|_{C^{\frac{1}{2} -s} (\overline{\Omega})}, & \text{if } s \in (0,1/2),                        \\[4pt]
        C\, h^{1/2} \lvert\log h\rvert\, \| f \|_{L^\infty(\Omega)},                       & \text{if } s = 1/2,                              \\[4pt]
        C\, h^{1/2} \lvert\log h\rvert^{1/2}\, \| f \|_{C^\beta(\overline{\Omega})},       & \text{if } s \in (1/2,1) \text{ and } \beta > 0.
    \end{cases}
\end{align}
Our results indicate that the use of weighted basis  can substantially improve the convergence rate, achieving up to $ h^{2-s} \lvert\log h\rvert^{1/2}$, albeit at the cost of requiring higher regularity of both $f$ and $\Omega$.
Furthermore, Theorem~4.9 in \cite{Borthagaray2017} demonstrates that when graded meshes are employed, the convergence rate in~\eqref{eq:Acosta-Borthagaray error} can be improved up to an almost linear rate, proportional to $N^{-\frac 1 d} |\log N^{-\frac 1 d}|^{1/2}$, for $s \in (1/2,1)$ where in $N$ is the degrees of freedom (recall that in the case of uniform meshes $N_h \sim h^{-d}$).
The study of $H^1$ rates is done in \cite{BorthagarayCiarletJr2019}. In the same paper the author show for dimension $d=1$ that graded meshes allow for $H^s$-rates of order $h^{2-s}$ (see \cite[Remark 4.5]{BorthagarayCiarletJr2019}).
In the present work, we piece-wise linearly approximate $u/\delta^s$ which sufficiently smooth everywhere in $\Omega$ so graded meshes do not seem to provide a major improvement. This also simplifies the implementation.

Later, Bonito, Lei, and Pasciak~\cite{Bonito2019} improved the convergence rate to $h^{\kappa - s} \lvert\log h\rvert $ for $\kappa \in (s, 3/2)$, under the additional assumption that the solution to~\eqref{eq:fracDirProb} possesses higher regularity at boundary.
Their assumption on $u$, however, does not hold in general, for instance when $f$ is nonnegative and bounded (see \eqref{eq:u like ds if f bounded} and \eqref{eq:fpositive}).
Our approach is specifically designed to construct approximations for solutions of~\eqref{eq:fracDirProb} when $u$ has the generic behaviour $\dOmega^s$.
Nevertheless, in~\Cref{subsec:bonito} we also present numerical experiments for cases where $u$ is smooth, using a particular right-hand side function $f$ proposed in~\cite{Bonito2019}. Although we do not provide a theoretical justification for these results, the observed convergence rates appear to match, and in some instances even surpass those reported in~\cite{Bonito2019}.

Recently, \cite{Faustmann2022} established \emph{local interior} error estimates of order up to $h^{2-s}$ (see Corollary~2.4 therein), showing that the dominant errors associated with piece-wise linear basis functions are indeed concentrated near the boundary.
This observation further supports our approach, which is specifically designed to mitigate this primary source of error.
In \cite{Faustmann2023},
the authors apply $hp$-FEM (i.e. adaptive methods)
and get exponential convergence on the number of degrees of freedom.
The key disadvantage of this method is that it is notoriously difficult to implement.

\medskip
An alternative approach for this type of problem arises from the well-known Caffarelli--Silvestre extension~\cite{Caffarelli2007}, which connects the solution of the nonlocal problem~\eqref{eq:fracDirProb} in $\mathbb{R}^d$ with that of a corresponding local elliptic problem posed in $\mathbb{R}^{d+1}$.
Consequently, this approach is only valid for fractional powers of local operators.
We have not found in the literature FEM schemes for \eqref{eq:fracDirProb} using the Caffarelli--Silvestre extension.

\medskip
We want to emphasize that both our work and the previously cited contributions address problem~\eqref{eq:fracDirProb} involving the fractional Laplacian in its integral form, as defined in~\eqref{eq:FracLapInt} over the entire space~$\mathbb{R}^d$. However, the fractional Laplacian admits several distinct definitions (see, e.g.,~\cite{Kwasnicki, Whatisfractional}). Suitable localizations of these definitions to domains yield non-equivalent operators.  Among these alternative formulations, the \emph{spectral fractional Laplacian} has received particular attention due to its applications to problem~\eqref{eq:fracDirProb} and to other equations involving fractional diffusion operators. The solutions of the corresponding Dirichlet problem behave like $u \sim \dOmega$ (\cite{StingaCaffarelli2016}),
so the main difficulty of our framework is not present.
This operator also admits a Caffarelli-Silvestre type extension (\cite{StingaTorrea2010})
Finite element methods for the extension problem were first introduced in the pioneering work \cite{Nochetto2015}, achieving a convergence rate of $\lvert\log N\rvert^s N^{-1/(d+1)}$ in the $s$ fractional Sobolev norm corresponding to the problem, where $N$ stands for the degrees of freedom.
To deal with computational challenges arising from the extra dimension, see  \cite{Schwab2019}.
Subsequently, in \cite{Banjai2023}, the authors obtain exponential $H^s$ convergence rates using $hp$-FEM.
For other approaches see \cite{Bonito2015, Cusimano2018, Cusimano2020, Vabishchevich2015, Carrillo2025}.

\medskip

We conclude by mentioning that specialised finite element bases have been employed in other contexts, such as acoustic and electromagnetic scattering, to mitigate the growth in the number of degrees of freedom required by standard finite element methods to maintain a given accuracy. We refer the reader to \cite{Chandler2007, Chandler2012}, where products of piecewise polynomials with plane
waves were chosen, or to \cite{Groth2018}, for oscillatory basis functions capturing the high-frequency solution behaviour in a transmission problem.

\section{Proofs of the  main results}\label{sec:proofs}

In order to prove some of our results, we need to extend Hölder functions from $\Omega$ to $\Omega_h$ and $\R^d$. To this end, let $B_R$ be a ball such that
\[
    \overline{\Omega} \subset \overline{\Omega_h} \subset B_R .
\]
Let $\Omega$ be such that
$\partial \Omega \in C^{1,1}$. For any $\mu\in (0,2]$ consider a linear extension operator $E_\mu$ satisfying
\begin{equation}
    \label{eq:extension}
    \begin{aligned}
         & E_\mu: W^{\mu,\infty}(\overline{\Omega}) \to W^{\mu,\infty}(\R^d), \quad E_\mu u = u \ \text{in } \Omega, \quad \textup{supp}\{E_\mu u\} \subset B_R, \\
         & \|E_\mu u\|_{W^{\mu,\infty}(\R^d)} \leq C \, \|u\|_{W^{\mu,\infty}(\overline{\Omega})}, \quad \text{for some constant } C = C(d,\mu,\Omega,R).
    \end{aligned}
\end{equation}
The existence of such an extension operator can be found in \cite[Chapter VI, Theorem 3]{stein1970} and \cite[Lemma~6.37]{Gilbarg2001}.
We point out that in the construction of Stein $E_\mu = E_1$ for all $\mu \in (0,1]$.
Furthermore, $E_1 : C(\overline \Omega) \to C_b(\Rd)$.
Lastly, let us introduce the restriction $r_\Omega$ such that $r_\Omega u = u|_{\Omega}$.
We first make the observation that for $C^{1,1}$ domains the
power $s$ of the
regularized distance function $\delta$ belongs to $V$.
\begin{lemma}
    \label{lem:deltas in Hs0}
    Assume that $\partial \Omega \in C^{1,1}$ and that $\delta \in W^{1,\infty}(\Omega)$ satisfies $c \delta \le \dOmega \le C \delta$.
    Then, $\delta^s \in H^s(\Rd)$ for all $s\in (0,1)$.
\end{lemma}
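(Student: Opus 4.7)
My plan is to verify directly the two conditions defining $H^s(\Rd)$: that $\delta^s$ (extended by zero outside $\Omega$) belongs to $L^2(\Rd)$ and that its Gagliardo seminorm is finite. The first is immediate, since $\delta$ is bounded on $\overline\Omega$ and $\delta^s$ has compact support, so I focus on the seminorm.

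The main step is to rewrite the double integral in the equivalent form
\[
[\delta^s]_{H^s(\Rd)}^2 = \int_{\Rd} \int_{\Rd} \frac{|\delta^s(x+h) - \delta^s(x)|^2}{|h|^{d+2s}} \, \dd h \, \dx,
\]
and to split the $h$-integral into $|h| > 1$ (easy, handled by boundedness of $\delta^s$ together with the compactness of its support) and $|h| \le 1$. For small $|h|$, I would use two complementary pointwise estimates on the increment. In the near-boundary region, where $\min(\dOmega(x), \dOmega(x+h)) \le K|h|$ for some fixed constant $K$ depending on $\|\nabla\delta\|_\infty$ and on $c$, the elementary inequality $|a^s - b^s| \le |a-b|^s$ valid for $a,b \ge 0$, combined with the Lipschitz regularity of $\delta$ and the comparison $\delta \le c\, \dOmega$ (which also handles the case when one of the two points falls outside $\Omega$), gives $|\delta^s(x+h) - \delta^s(x)| \le C|h|^s$. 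In the complementary interior region, both $\delta(x)$ and $\delta(x+h)$ are comparable and bounded below by a multiple of $|h|$, so the mean value theorem applied to $t \mapsto t^s$ yields the finer estimate $|\delta^s(x+h) - \delta^s(x)| \le C\, \delta(x)^{s-1}|h|$.

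It then remains to integrate. The $C^{1,1}$ regularity of $\partial\Omega$ provides the tubular neighbourhood estimate $|\{x : \dOmega(x) < r\}| \lesssim r$ for small $r$, so the near-boundary contribution to $\int_{\Rd} |\delta^s(x+h)-\delta^s(x)|^2 \dx$ is bounded by $C|h|^{2s+1}$. The interior contribution reduces, via a layer-cake argument in $\dOmega$, to $|h|^2 \int_{c|h|}^{\operatorname{diam}(\Omega)} r^{2s-2}\, \dd r$, which is again $\lesssim |h|^{2s+1}$ when $s \ne 1/2$, and $\lesssim |h|^2\, |\log|h||$ in the borderline case $s = 1/2$. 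In every case the resulting one-variable integral $\int_{|h|\le 1} |h|^{1-d}(1 + |\log|h||) \, \dd h$ converges, which concludes the proof.

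The step I expect to be most delicate is choosing the constant $K$ so that the two pointwise bounds match at the interface $\dOmega(x) \sim |h|$, together with handling the borderline exponent $s = 1/2$, where a logarithmic divergence appears in the layer integral but is still absorbed by the subsequent $|h|$-integration. Nothing else requires regularity of $\partial\Omega$ beyond the $C^{1,1}$ hypothesis, which is exactly what is needed for the tubular neighbourhood measure estimate.
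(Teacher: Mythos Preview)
Your approach is correct and rests on the same two pointwise estimates as the paper's proof---the H\"older bound $|\delta^s(x)-\delta^s(y)|\le C|x-y|^s$ coming from Lipschitz regularity of $\delta$, and the mean-value bound $|\delta^s(x)-\delta^s(y)|\le C\min\{\delta(x),\delta(y)\}^{s-1}|x-y|$---together with the tubular-neighbourhood integrability of $d_\Omega^{s-1}$. The organisation is different, though. The paper splits $\Rd\times\Rd$ into $\Omega\times\Omega$ and the cross piece $\Omega\times(\Rd\setminus\Omega)$; on $\Omega\times\Omega$ it \emph{multiplies} the two pointwise bounds (one per factor of $|\delta^s(x)-\delta^s(y)|^2$) to get directly
\[
|\delta^s(x)-\delta^s(y)|^2 \le C\,\min\{\delta(x),\delta(y)\}^{s-1}\,|x-y|^{1+s},
\]
which is then integrated without any case distinction in $s$ and without the borderline $s=1/2$. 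Your region split (near-boundary versus interior in the variable $h$) trades that algebraic trick for a more elementary but longer case analysis; both routes use the $C^{1,1}$ hypothesis in the same place.

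One small slip to fix: the claim that $|h|^2\int_{c|h|}^{\operatorname{diam}(\Omega)} r^{2s-2}\,dr \lesssim |h|^{2s+1}$ holds only for $s<\tfrac12$. For $s>\tfrac12$ the $r$-integral is bounded by a constant, so the interior contribution is $\lesssim |h|^2$, not $|h|^{2s+1}$. This does not break the argument---$\int_{|h|\le 1}|h|^{2-d-2s}\,dh$ still converges for all $s\in(0,1)$---but the final one-variable integral you display should be adjusted accordingly in that range.
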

\begin{proof}
    Let us decompose the integral taking advantage of the symmetry in $x$ and $y$
    \begin{equation*}
        \int_{\Rd}\int_{\Rd} \frac{|\delta(x)^s - \delta(y)^s|^2}{|x-y|^{d+2s}} \dy \dx =
        \int_{\Omega}\int_\Omega \frac{|\delta(x)^s - \delta(y)^s|^2}{|x-y|^{d+2s}} \dy \dx
        + 2 \int_{\Omega}\int_{\Rd \setminus \Omega} \frac{|\delta(x)^s - \delta(y)^s|^2}{|x-y|^{d+2s}} \dy \dx
    \end{equation*}
    If $x \in \Omega$ then we use that $\Rd \setminus \Omega \subset \Rd \setminus B(x, \dOmega(x))$ to estimate
    \begin{align*}
        \int_{\Rd \setminus \Omega} \frac{|\delta(x)^s - \delta(y)^s|^2}{|x-y|^{d+2s}} \dy & \le C \delta(x)^{2s} \int_{\dOmega(x)}^\infty r^{-d-2s} r^{d-1} \mathrm dr = C \delta(x)^{2s} \dOmega(x)^{-2s}.
    \end{align*}
    Therefore, we can integrate in $x$ to obtain that the integral in $\Omega \times (\Rd \setminus \Omega)$ is finite.
    In order to deal with the integral in $\Omega \times \Omega$ we point out that
    \begin{equation*}
        |\delta(x)^s - \delta(y)^s| \le \int_0^1 \left| \frac{ d  }{ d \theta} \delta ( \theta x + (1-\theta) y))^s \right| \mathrm{d} \theta \le C \min\{\delta(x), \delta(y)\}^{s-1} |x-y|.
    \end{equation*}
    We also have the estimate using Lipschitz regularity
    \begin{equation*}
        |\delta(x)^s - \delta(y)^s| \le C_s|\delta(x) - \delta(y)|^s \le C |x-y|^s.
    \end{equation*}
    Therefore, we have that
    \begin{align*}
        \int_{\Omega}\int_\Omega \frac{|\delta(x)^s - \delta(y)^s|^2}{|x-y|^{d+2s}} \dy \dx
         & \le C\int_{\Omega}\int_\Omega \min\{\delta(x), \delta(y)\}^{s-1} |x-y|^{-d-s+1} \dy \dx    \\
         & \leq C\int_{\Omega}\int_\Omega \min\{\dOmega(x), \dOmega(y)\}^{s-1} |x-y|^{-d-s+1} \dy \dx \\
         & = 2 C\int_\Omega \dOmega(x)^{s-1}
        \int_{\{ y \in \Omega : \dOmega(x) < \dOmega(y) \}}  |x-y|^{-d-s+1} \dy \dx.
    \end{align*}
    On the one hand, we can estimate
    \begin{equation*}
        \int_{\{ y \in \Omega : \dOmega(x) < \dOmega(y) \}} |x-y|^{-d-s+1} \dy \le \int_{B(x, \operatorname{diam}(\Omega))} |x-y|^{-d-s+1} \dy = C \operatorname{diam}(\Omega)^{1-s}
    \end{equation*}
    Since $\Omega \in C^{1,1}$
    we can use the tubular neighbourhood theorem to prove that
    \begin{equation*}
        \int_\Omega \dOmega (x)^{s-1} \dx < \infty.\qedhere
    \end{equation*}
\end{proof}
\normalcolor

We are now ready to prove \Cref{lem:subspace}.

\begin{proof}[Proof of \Cref{lem:subspace}]
    Let $v\in V_h$. By definition $v=0$ in $\R^d\setminus \Omega$. Let us now check that $v\in H^s(\R^d)$. Clearly,
    \[
        \|v\|_{L^2(\R^d)} = \|v\|_{L^2(\Omega)}\le|\Omega|\|v\|_{L^\infty(\R^d)}.
    \]
    On the other hand, we can write $v=\delta^s E_1 r_\Omega w$ for some $w\in \plinear\triangulation\subset W^{1,\infty}({\Omega}_h)$. Thus,
    \begin{align*}
        [v]_{H^s(\R^d)} & = [\delta^s E_1 w]_{H^s(\R^d)}                                                                            \\
                        & \leq \|\delta^s\|_{L^\infty(\R^d)} [ E_1 w]_{H^s(\R^d)}+[\delta^s ]_{H^s(\R^d)}\|E_1 w\|_{L^\infty(\R^d)}
    \end{align*}
    Clearly $\|\delta^s\|_{L^\infty(\R^d)},\|E_1 w\|_{L^\infty(\R^d)}<+\infty$ and due to  \Cref{lem:deltas in Hs0} we have $[\delta^s ]_{H^s(\R^d)} < \infty$. Finally, we note that, by the properties of the extension operator $E_1$ and the standard interpolation inequality (see \cite{Lunardi2018}), we have that
    \[
        \| E_1 w \|_{H^s(\R^d)}\leq C(s) \| E_1 w\|_{L^2(\R^d)}^{1-s}\| E_1 w\|_{H^1(\R^d)}^s \le C(s,R) \| E_1 w\|_{L^2(\R^d)}^{1-s}\| E_1 w\|_{W^{1,\infty}(\R^d)}^s<+\infty,
    \]
    where $R$ is the constant in \eqref{eq:extension}.
\end{proof}

\normalcolor

\subsection{Proof of convergence rates in \texorpdfstring{$H^s$}{Hs}}

As usual in the finite elements setting, we rely on the following Céa's lemma to prove the desired convergence result.

\begin{lemma}[Céa's lemma]\label{lem:Cea}
    Let $s\in(0,1)$, $\partial\Omega\in C^{1,1}$ and $\delta$ satisfy \eqref{as:delta} with $\sigma=1$.  Let also $u$ be the corresponding solution of \eqref{eq:main_prob_weak}, $V_h$ be given by \eqref{eq:Vh}, and $u_h$ be the solution of \eqref{eq:main_prob_FE}. Then,
    \begin{equation}
        \label{eq:Cea lemma}
        [u - u_h]_{H^s(\Rd)} = \min_{v_h \in V_h} [u - v_h]_{H^s(\Rd)}.
    \end{equation}
\end{lemma}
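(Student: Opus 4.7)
The plan is to follow the classical Céa-type argument, taking advantage of the fact that the bilinear form $a(\cdot,\cdot)$ defined in \eqref{eq:BinFormFracLap} is symmetric and that its associated energy seminorm coincides exactly with $[\cdot]_{H^s(\R^d)}$. The key structural ingredient is the inclusion $V_h \subset V$, which is supplied by \Cref{lem:subspace} (this is precisely why the hypotheses $\partial \Omega \in C^{1,1}$ and \eqref{as:delta} with $\sigma = 1$ appear in the statement).

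First I would exploit the inclusion $V_h \subset V$ to test the weak formulation \eqref{eq:main_prob_weak} against elements of $V_h$. Subtracting \eqref{eq:main_prob_FE} from \eqref{eq:main_prob_weak} then yields the Galerkin orthogonality relation
\begin{equation*}
    a(u - u_h, v_h) = 0 \qquad \forall v_h \in V_h.
\end{equation*}
In particular, since $u_h - v_h \in V_h$ for every $v_h \in V_h$, we have $a(u - u_h, u_h - v_h) = 0$.

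The next step is to exploit the symmetry of $a$ together with the identity $[w]_{H^s(\R^d)}^2 = a(w,w)$. Writing $u - v_h = (u - u_h) + (u_h - v_h)$ and expanding,
\begin{equation*}
    [u - v_h]_{H^s(\R^d)}^2
    = a(u - u_h, u - u_h) + 2\, a(u - u_h, u_h - v_h) + a(u_h - v_h, u_h - v_h).
\end{equation*}
The cross term vanishes by Galerkin orthogonality, so
\begin{equation*}
    [u - v_h]_{H^s(\R^d)}^2
    = [u - u_h]_{H^s(\R^d)}^2 + [u_h - v_h]_{H^s(\R^d)}^2
    \ge [u - u_h]_{H^s(\R^d)}^2,
\end{equation*}
which shows $[u - u_h]_{H^s(\R^d)} \le [u - v_h]_{H^s(\R^d)}$ for every $v_h \in V_h$. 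Since $u_h$ itself belongs to $V_h$, the infimum on the right-hand side is attained at $v_h = u_h$, giving the claimed equality \eqref{eq:Cea lemma}.

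This argument is essentially routine, and no step presents a serious obstacle. The only subtlety worth flagging is the use of \Cref{lem:subspace}: without the inclusion $V_h \subset V$ one could not plug discrete test functions into \eqref{eq:main_prob_weak} and Galerkin orthogonality would fail. The regularity assumptions $\partial\Omega \in C^{1,1}$ and \eqref{as:delta} with $\sigma = 1$ are inherited precisely for this reason.
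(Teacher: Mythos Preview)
Your proof is correct and follows essentially the same approach as the paper: both establish Galerkin orthogonality from the inclusion $V_h \subset V$ (via \Cref{lem:subspace}) and then deduce the best-approximation property. The only cosmetic difference is that you use the Pythagorean expansion $[u-v_h]^2 = [u-u_h]^2 + [u_h-v_h]^2$, whereas the paper writes $[u-u_h]^2 = a(u-u_h, u-v_h)$ and applies Cauchy--Schwarz; both exploit the same symmetry of $a$.
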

\begin{proof}
    We recall that $V_h \subset V$ as shown in Lemma \ref{lem:subspace}. The $\ge$ inequality in \eqref{eq:Cea lemma} holds because $u_h \in V_h$.
    Let $v_h \in V_h$.
    For any $w_h \in V_h$ we can use the weak formulation to deduce $a(u-u_h, w_h) = a(u , w_h) - a(u_h, w_h) = \int_\Omega f w_h - \int_\Omega f w_h = 0$.
    Therefore, we have that
    \begin{align*}
        [u - u_h]_{H^s(\Rd)}^2 & = a(u - u_h, u - u_h)
        = a(u - u_h, u) = a(u - u_h, u) - a(u - u_h, v_h)
        = a(u - u_h, u - v_h).
    \end{align*}
    Using the Cauchy-Schwarz inequality we have that
    \begin{align*}
        [u - u_h]_{H^s(\Rd)}^2 \le [u - u_h]_{H^s(\Rd)} [u - v_h]_{H^s(\Rd)}.
    \end{align*}
    If $[u - u_h]_{H^s(\Rd)} = 0$ then \eqref{eq:Cea lemma} holds trivially because $[\cdot]_{H^s(\Rd)} \ge 0$.
    If $[u - u_h]_{H^s(\Rd)} > 0$ then we can divide by $[u - u_h]_{H^s(\Rd)}$ to prove the $\le$ part of \eqref{eq:Cea lemma}.
\end{proof}

In order to build a suitable competitor to apply Céa's Lemma and obtain the desired orders of convergence, we need to define a suitable interpolant. More precisely, let $I_h : C(\R^d) \to PL(\triangulation)\subset W^{1,\infty}({\Omega}_h)$ be the piece-wise linear interpolation obtained by sampling on the vertexes of the triangles in $\triangulation$. We define, for $\mu\in (s,2]$  the map $J_h : \delta^s W^{\mu,\infty}({\R^d}) \to  V_h$  by
\begin{equation}\label{eq:competitor}
    J_h v\defeq \delta^s E_1
    r_\Omega
    I_h E_\mu \frac{v}{\delta^s}.
\end{equation}
We will not include the dependence on $\mu$ in the definition of $J_h$ for the sake of clarity.
Notice that since $E_\alpha w = w$ in $\Omega$ for any $\alpha$, for and $\alpha > 0$ we have that
\begin{equation*}
    J_h v = \delta^s E_\alpha
    r_\Omega
    I_h E_\mu \frac{v}{\delta^s}.
\end{equation*}
The aim of $J_h v$ is to give a rigorously meaning to $\delta^s I_h \frac{v}{\delta^s}$, where $v / \delta^s$ in only defined in $\Omega$ but not in $\Omega_h$.

We now show interpolators $I_h$ are bounded in $W^{\alpha,\infty}$ for $\alpha \in (0,1)$.
\begin{lemma}
    \label{lem:interpolant is uniformly bounded}
    Let $w \in W^{\alpha,\infty}(\Rd)$ for $\alpha \in (0,1]$.
    Then
    \begin{equation*}
        \|I_h w\|_{W^{\alpha,\infty}(\Omega)} \le C \|w\|_{W^{\alpha,\infty}(\Rd)} ,
    \end{equation*}
    where $C$ depends only on $\alpha$, $\Omega$, and the uniform constant of $\triangulation$.
\end{lemma}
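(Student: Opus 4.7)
The plan is to establish the $L^\infty$ bound trivially, derive an elementwise Lipschitz estimate for $I_h w$ with the right scaling in $h$, and then assemble the Hölder seminorm bound by a case analysis on whether $|x-y|$ is larger or smaller than $h$. Since on each simplex $K\in\triangulation$ the interpolant is the convex combination $I_h w(z)=\sum_i \lambda_i(z)\,w(x_i^K)$, one immediately obtains
\[
\|I_h w\|_{L^\infty(\Omega)} \le \sup_i |w(x_i)| \le \|w\|_{L^\infty(\Rd)},
\]
which handles the $L^\infty$ part of the $W^{\alpha,\infty}$ norm.

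The core ingredient is an elementwise gradient bound. On a simplex $K$ with vertices $x_1^K,\ldots,x_{d+1}^K$ and barycentric coordinates $\varphi_i^K$, using $\sum_i \nabla \varphi_i^K = 0$ I would write
\[
\nabla(I_h w)\bigl|_K = \sum_{i=2}^{d+1} \bigl(w(x_i^K)-w(x_1^K)\bigr)\,\nabla\varphi_i^K .
\]
Hölder continuity gives $|w(x_i^K)-w(x_1^K)| \le [w]_{C^{0,\alpha}(\Rd)}\,h_K^\alpha$, while shape regularity gives $|\nabla\varphi_i^K|\le C/h_K$. Combining,
\[
\|\nabla I_h w\|_{L^\infty(K)} \le C\,h_K^{\alpha-1}\,[w]_{C^{0,\alpha}(\Rd)} .
\]

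For the Hölder seminorm, fix $x,y\in\Omega$. If $|x-y|\ge h$, I would use the triangle inequality through $w(x),w(y)$:
\[
|I_h w(x)-I_h w(y)| \le |I_h w(x)-w(x)| + |w(x)-w(y)| + |w(y)-I_h w(y)|.
\]
The endpoint errors are each bounded by $[w]_{C^{0,\alpha}}h^\alpha$, because $I_h w(x)$ is a convex combination of values of $w$ at points within distance $c_2 h$ of $x$, and the middle term is $\le[w]_{C^{0,\alpha}}|x-y|^\alpha$. Since $h\le|x-y|$, the whole expression is $\le C\,[w]_{C^{0,\alpha}}|x-y|^\alpha$. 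If instead $|x-y|<h$, I parametrize $[x,y]$ and let $z_0=x,z_1,\dots,z_k=y$ be its successive intersections with the faces of $\triangulation$, so that each $[z_i,z_{i+1}]$ lies in a single simplex $K_i$. Applying the gradient bound on each $K_i$, using uniformity ($h_{K_i}\sim h$) and collinearity ($\sum|z_{i+1}-z_i|=|x-y|$),
\[
|I_h w(x)-I_h w(y)| \le \sum_{i=0}^{k-1} C\,h_{K_i}^{\alpha-1}\,[w]_{C^{0,\alpha}}\,|z_{i+1}-z_i| \le C\,h^{\alpha-1}\,[w]_{C^{0,\alpha}}\,|x-y|,
\]
and $|x-y|^{1-\alpha}\le h^{1-\alpha}$ converts this to $C\,[w]_{C^{0,\alpha}}|x-y|^\alpha$.

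The delicate point is this last case: applying the $C^{0,\alpha}$ bound on each sub-segment would yield $\sum|z_{i+1}-z_i|^\alpha$, which for $\alpha<1$ can exceed $|x-y|^\alpha$ and need not be controllable by the number of crossings alone. The way around it is to exploit that $I_h w$ is \emph{affine} on each simplex, so the correct local bound is Lipschitz (with constant of order $h^{\alpha-1}$) rather than Hölder; telescoping Euclidean lengths along the straight segment then collapses exactly to $|x-y|$, and the powers of $h$ recombine using $|x-y|<h$ to yield the desired $|x-y|^\alpha$.
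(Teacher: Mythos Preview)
Your proof is correct and takes a genuinely more elementary route than the paper's. The paper argues by real interpolation: it shows that $T_h := E_1 \circ r_\Omega \circ I_h$ is bounded $C_b(\Rd)\to L^\infty(\Rd)$ and $C_b^1(\Rd)\to W^{1,\infty}(\Rd)$ with operator norms independent of $h$, and then invokes the $K$-method to conclude boundedness on $W^{\alpha,\infty}$ for every $\alpha\in(0,1)$. Your direct argument --- the elementwise gradient estimate $\|\nabla I_h w\|_{L^\infty(K)}\le C\,h_K^{\alpha-1}[w]_{C^{0,\alpha}}$ together with the $|x-y|\gtrless h$ case split --- avoids both the extension operator and interpolation-space machinery entirely, and makes transparent exactly where shape regularity and mesh uniformity enter. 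The paper's approach is more abstract but reuses tools (the Stein extension, real interpolation) that are already in place for the surrounding lemmas; yours is self-contained and would work verbatim for any piecewise-affine interpolant on a quasi-uniform mesh. One small caveat: in the $|x-y|<h$ branch you tacitly assume the straight segment $[x,y]$ stays inside $\overline{\Omega_h}$, so that each sub-segment lies in some $K_i\in\triangulation$. This is not a formal consequence of the over-triangulation axioms when $\Omega$ is non-convex; in the paper's standing context ($\partial\Omega\in C^{1,1}$ and $h$ small relative to the geometry of $\Omega$) it holds, but a one-line remark to that effect would make your argument airtight.
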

\begin{proof}
    First,
    assume that $w \in W^{1,\infty}(\Rd)$.
    It is a trivial computation that for all $\phi \in W^{1,\infty}(\Rd)$
    \begin{equation}
        \label{eq:interpolation continuity Linfty and W1infty}
        \|I_h \phi \|_{L^\infty(\Omega_h)} \le \max_{\overline \Omega}|\phi|
        \qquad
        \|D I_h \phi \|_{L^\infty(\Omega_h)} \le C\max_{\overline \Omega_h}|D \phi|
    \end{equation}
    where $C$ does not depend on $h$.
    We observe that the operator $T_h = E_{1} \circ r_\Omega \circ I_h$ is
    \[
        T_h : C_b (\Rd) \to L^\infty(\Rd) \qquad T_h : C_b^1 (\Rd) \to W^{1,\infty}(\Rd).
    \]
    Applying the $K$-interpolation results in \cite[Theorem 1.6, Example 1.8, result after the proof Example 1.8]{Lunardi2018} that
    $
        T_h: W^{\alpha,\infty} (\Rd) \to W^{\alpha,\infty}(\Rd),
    $
    and we have
    \begin{equation*}
        \|T_h\|_{\mathcal L(W^{\alpha,\infty} (\Rd), W^{\alpha,\infty} (\Rd))}
        \le C(\alpha,d)
        \|T_h\|_{\mathcal L(C_b (\Rd), L^\infty (\Rd))}^{1-\alpha}
        \|T_h\|_{\mathcal L(C_b^1 (\Rd), W^{1,\infty} (\Rd))}^{\alpha}
    \end{equation*}
    and this can be bounded independently on $h$ by \eqref{eq:interpolation continuity Linfty and W1infty}.
    Therefore, we can write
    \begin{align*}
        \|I_h w\|_{W^{\alpha,\infty}(\Omega)} & \le \|E_1 r_\Omega I_h w \|_{W^{\alpha,\infty}(\Rd)} = \| T_h w \|_{W^{\alpha, \infty} (\Rd)} \le C \|w\|_{W^{\alpha,\infty}(\Rd)}. \qedhere
    \end{align*}
\end{proof}
We can show $W^{\alpha,\infty}$ rates of the interpolator $I_h$
\begin{lemma}
    \label{lem:interpolation}
    For $\alpha \in (0,1]$ and $ \mu \in (\alpha, 2]$ we have that for all $w \in W^{\mu,\infty}(\Rd)$
    \[
        \left\|w - I_h w\right\|_{W^{\alpha,\infty}
                (\Omega)
            } \le C h^{\mu-\alpha} \|w\|_{W^{\mu,\infty}(\Rd)},
    \]
    where $C$ depends only on $\alpha$, $\mu$, $\Omega$, and the uniform constants of the triangulation.
\end{lemma}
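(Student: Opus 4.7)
My plan is to combine classical pointwise interpolation error estimates on each simplex with a real-interpolation argument between $L^\infty$ and $W^{1,\infty}$, exactly in the spirit of \Cref{lem:interpolant is uniformly bounded}. First, using the extension $E_\mu$ I may replace $w$ by $E_\mu w \in W^{\mu,\infty}(\R^d)$ at the cost of a controlled constant, so that $I_h w$ is well-defined on the whole of $\overline{\Omega}_h$ and the bound to be proved becomes
\[
\|w - I_h w\|_{W^{\alpha,\infty}(\Omega)} \le C\, h^{\mu-\alpha}\|w\|_{W^{\mu,\infty}(\R^d)}.
\]

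The core of the argument is two endpoint estimates, one in $L^\infty$ and one in $W^{1,\infty}$, simplex by simplex. For each $K \in \triangulation$ and every $\mu \in (0,2]$ I would prove
\[
\|w - I_h w\|_{L^\infty(K)} \le C h_K^{\mu}\|w\|_{W^{\mu,\infty}(K)}
\]
by writing $I_h w(x) - w(x)$ as a convex combination of differences $w(v_i) - w(x)$ at the vertices of $K$ when $\mu \le 1$ (each bounded by $[w]_{C^{0,\mu}} h_K^\mu$), and by subtracting the first-order Taylor polynomial of $w$ at the barycenter of $K$ (which $I_h$ reproduces) when $\mu \in (1, 2]$, using Taylor's formula with a $(\mu-1)$-Hölder remainder. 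An analogous argument, based on the fact that $\nabla I_h w$ is a constant convex combination of nodal gradients and that $\nabla w \in C^{0,\mu-1}$ when $\mu > 1$, yields
\[
\|\nabla(w - I_h w)\|_{L^\infty(K)} \le C h_K^{\mu-1}\|w\|_{W^{\mu,\infty}(K)},\qquad \mu \in (1,2].
\]
The uniformity of $C$ in $K$ and $h$ follows from shape regularity and from $h_K \le c_2 h$.

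With these endpoint bounds in hand, I would form the operator $S_h := \mathrm{id} - E_1 \circ r_\Omega \circ I_h$ as in \Cref{lem:interpolant is uniformly bounded} to lift the local estimates to $\R^d$, and then interpolate. For $\mu \in (1,2]$ and $\alpha \in (0,1)$ the identification $W^{\alpha,\infty}(\R^d) = (L^\infty(\R^d), W^{1,\infty}(\R^d))_{\alpha,\infty}$ from \cite{Lunardi2018} combined with the endpoint rates gives
\[
\|S_h w\|_{W^{\alpha,\infty}(\R^d)} \le C \|S_h w\|_{L^\infty}^{1-\alpha}\|S_h w\|_{W^{1,\infty}}^{\alpha} \le C h^{\mu-\alpha}\|w\|_{W^{\mu,\infty}(\R^d)},
\]
and the case $\alpha=1$ is immediate from the two endpoints. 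For $\mu \in (\alpha, 1]$ the $W^{1,\infty}$ endpoint is unavailable, so instead I would combine the $L^\infty$ rate $\|S_h w\|_{L^\infty} \le C h^\mu \|w\|_{W^{\mu,\infty}}$ with the uniform Hölder bound $[S_h w]_{C^{0,\mu}} \le C \|w\|_{W^{\mu,\infty}}$ already supplied by \Cref{lem:interpolant is uniformly bounded}, and apply the elementary Hölder interpolation inequality $[g]_{C^{0,\alpha}} \le C \|g\|_{L^\infty}^{1-\alpha/\mu}[g]_{C^{0,\mu}}^{\alpha/\mu}$ to reach the same conclusion. Restricting to $\Omega$ closes the argument.

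The step I expect to be the most delicate is the low-regularity regime $\mu \le 1$, where one cannot differentiate and must argue purely with Hölder seminorms; this is what forces the alternative route via the elementary Hölder interpolation inequality rather than the cleaner $L^\infty$--$W^{1,\infty}$ interpolation pair. The rest is a careful but standard exercise in piecewise-polynomial approximation combined with the inheritance of the real-interpolation framework of the preceding lemma.
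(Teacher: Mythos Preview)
Your overall strategy is sound and genuinely different from the paper's. The paper never proves simplex-wise estimates at fractional $\mu$; instead it reduces to $\mu=2$ by mollification. It uses only the classical integer-order bounds $\|w-I_h w\|_{L^\infty(\Omega_h)}\le Ch^2\|w\|_{W^{2,\infty}}$ and $\|w-I_h w\|_{W^{1,\infty}(\Omega_h)}\le Ch\|w\|_{W^{2,\infty}}$ from Ciarlet, interpolates between them via the elementary inequality \eqref{eq:interpolation Cs} to reach $W^{\alpha,\infty}$, and then for fractional $\mu$ splits $w-I_h w=(w-w_\sigma)+(w_\sigma-I_h w_\sigma)+I_h(w_\sigma-w)$ with $w_\sigma=\eta_\sigma*w$, controls the last term by \Cref{lem:interpolant is uniformly bounded}, and sets $\sigma=h$. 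You instead obtain the endpoint rates directly at fractional $\mu$ via Taylor-with-H\"older-remainder and interpolate only on the $\alpha$ side. Your route avoids the mollifier at the price of the case split at $\mu=1$; the paper's route avoids the split at the price of the mollification device.

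Two points need correction. First, the detour through $S_h=\mathrm{id}-E_1\circ r_\Omega\circ I_h$ on $\R^d$ does not work as written: outside $\Omega$ one has $S_h w=w-E_1(r_\Omega I_h w)$, and neither $\|S_h w\|_{L^\infty(\R^d)}$ nor $\|S_h w\|_{W^{1,\infty}(\R^d)}$ carries the rate $h^\mu$ or $h^{\mu-1}$ (indeed $w$ need not even have compact support). The fix is trivial: drop $S_h$ and $\R^d$ entirely. Your simplex-wise bounds already give $\|w-I_h w\|_{L^\infty(\Omega_h)}\le Ch^\mu$ and $\|w-I_h w\|_{W^{1,\infty}(\Omega_h)}\le Ch^{\mu-1}$, and the multiplicative inequality $[g]_{W^{\alpha,\infty}(\Omega_h)}\le 2^{1-\alpha}\|g\|_{L^\infty(\Omega_h)}^{1-\alpha}[g]_{W^{1,\infty}(\Omega_h)}^\alpha$ (this is exactly \eqref{eq:interpolation Cs}) holds on $\Omega_h$ directly, so no extension or real-interpolation machinery is needed; the same remark applies to your $\mu\le 1$ branch, which is otherwise correct. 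Second, the phrase ``$\nabla I_h w$ is a constant convex combination of nodal gradients'' is inaccurate: $I_h$ samples values, not gradients. The $W^{1,\infty}$ rate follows instead from subtracting the affine Taylor polynomial $p$ (which $I_h$ reproduces) and combining $\|\nabla(w-p)\|_{L^\infty(K)}\le Ch_K^{\mu-1}$ with the inverse estimate $\|\nabla I_h(w-p)\|_{L^\infty(K)}\le Ch_K^{-1}\|w-p\|_{L^\infty(K)}$.
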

\begin{proof}
    First, we
    apply \cite[Theorem 3.1.5]{ciarletFiniteElementMethod2002} to show the result for some integer $\alpha, \mu$, i.e.,
    \begin{align*}
        \left\|w - I_h w \right\|_{L^\infty(\Omega_h)} \le C h^{2} \|w\|_{W^{2,\infty}(\Omega_h)},
        \qquad
        \left\|w - I_h w \right\|_{W^{1,\infty}(\Omega_h)} \le C h \|w\|_{W^{2,\infty}(\Omega_h)}.
    \end{align*}
    Let $\eta \in C^\infty_c(\Rd)$ with support $B_1$. We take $\eta_\sigma(x) = \sigma^{-d} \eta(x/\sigma)$.
    Let $w_\sigma = \eta_\sigma * w$. We have the estimates
    \begin{equation}
        \label{eq:convolution rates}
        \|w_\sigma\|_{W^{2,\infty}(\Rd)}
        \le C_\eta
        \sigma^{\mu - 2}
        \|w\|_{W^{\mu,\infty}(\Rd)},
        \qquad \|w - w_\sigma\|_{W^{\alpha,\infty}(\Rd)}
        \le C_\eta \sigma^{\mu-\alpha}\|w\|_{W^{\mu,\infty}(\Rd)}.
    \end{equation}
    Now we observe we that for $\alpha \in (0,1]$ we have
    \begin{equation}
        \label{eq:interpolation Cs}
        \begin{aligned}
            [u]_{W^{\alpha,\infty}(\Omega_h)}
             & = \sup_{\substack{x, y \in \Omega_h \\ x \ne y } }
            \frac{|u(x) - u(y)|}{|x-y|^\alpha}
            =
            \sup_{\substack{x, y \in \Omega_h      \\ x \ne y } }
            |u(x) - u(y)|^{1-\alpha} \left(\frac{|u(x) - u(y)|}{|x-y|} \right)^\alpha
            \\
             & \le
            2^{1-\alpha} \|u\|_{L^\infty(\Omega_h)}^{1-\alpha} [u]_{W^{1,\infty}(\Omega_h)}^\alpha.
        \end{aligned}
    \end{equation}
    Now we can use interpolation on the left-hand side applying \eqref{eq:interpolation Cs}
    to deduce that for $\alpha \in (0,1]$
    \begin{equation*}
        [ w - I_h w ]_{W^{\alpha,\infty}(\Omega_h)} \le C h^{2-\alpha} \|w\|_{W^{2,\infty}(\Omega_h)}.
    \end{equation*}
    As a last step, we interpolate on the right-hand side by using convolutions recalling the rates \eqref{eq:convolution rates}.
    We can write using \Cref{lem:interpolant is uniformly bounded} that
    \begin{align*}
        [w - I_h w]_{W^{\alpha,\infty}
                (\Omega)
            }
         & \le [w - w_\sigma]_{W^{\alpha,\infty}
                (\Omega)
            }
        + [w_\sigma - I_h w_\sigma]_{W^{\alpha,\infty}
                (\Omega)
            }
        + [I_h w_\sigma - I_hw ]_{W^{\alpha,\infty}
                (\Omega)
        }                                          \\
         & \le C [w - w_\sigma]_{W^{\alpha,\infty}
                (\Omega)
            }
        + [w_\sigma - I_h w_\sigma]_{W^{\alpha,\infty}
                (\Omega)
            }
    \end{align*}
    The first term on the right-hand side can be controlled using \eqref{eq:convolution rates} by $C \sigma^{\mu-\alpha}$.
    We estimate the other term also using \eqref{eq:convolution rates}
    \begin{equation*}
        [w_\sigma - I_h w_\sigma]_{W^{\alpha,\infty}
                (\Omega)
            } \le h^{2-\alpha} \|w_\sigma\|_{W^{2,\infty}(\Omega_h)} \le h^{2-\alpha} \sigma^{\mu - 2} \|w\|_{W^{\mu,\infty}(\Rd)}.
    \end{equation*}
    Taking $\sigma = h$ completes the proof.
\end{proof}
\normalcolor

Let us start prove now the order of approximation of a suitable competitor.

\begin{lemma}
    \label{lem:rate of Jh}
    Let $s\in(0,1)$, $\partial \Omega\in C^{1,1}$, $\mu\in (s,2]$, and $\delta$ satisfy \eqref{as:delta} with $\sigma=\max\{1,\mu\}$.
    Let also $v\in \delta^s C(\overline{\Omega})$ such that $v/\delta^s \in W^{\mu,\infty}(\overline{\Omega})$.
    Then, for all $\alpha\in (s,\min\{\mu,1\}]$, we have that
    \begin{equation}\label{eq:estcompeti1}
        [v - J_h v]_{H^s (\R^d)} \le \frac{C_1}{\sqrt{\alpha-s}} h^{\mu-\alpha}  \|v/\delta^s\|_{W^{\mu,\infty}({\Omega})},
    \end{equation}
    with $C_1=C_1(\Omega,d,s,\mu)$. In particular, we have that
    \begin{equation}\label{eq:estcompeti2}
        [v - J_h v]_{H^s (\R^d)} \le C_2 h^{\mu-s}\lvert\log h\rvert^{\frac{1}{2}}  \|v/\delta^s\|_{W^{\mu,\infty}({\Omega})},
    \end{equation}
    with $C_2=C_2(\Omega,d,s,\mu)$.
\end{lemma}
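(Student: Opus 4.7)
The plan is to reduce the estimate to the interpolation error of the smooth function $w = v/\delta^s$. Set $W = E_\mu w$, so $W \in W^{\mu,\infty}(\R^d)$, and let $R = W - I_h W$. Then $v - J_h v = \delta^s R$ on $\Omega$ and vanishes on $\R^d\setminus\Omega$; by \Cref{lem:interpolation},
\[
\|R\|_{W^{\alpha,\infty}(\Omega)} \le C\, h^{\mu-\alpha}\, \|w\|_{W^{\mu,\infty}(\Omega)} \qquad \text{for every } \alpha \in (s,\min\{\mu,1\}].
\]
The key idea is to split the integral defining $[\delta^s R]^2_{H^s(\R^d)}$ into a diagonal part $\Omega\times\Omega$ and a boundary part $\Omega\times(\R^d\setminus\Omega)$, and, on the diagonal block, to use the pointwise inequality
\[
|\delta^s(x) R(x) - \delta^s(y) R(y)|^2 \le 2|\delta^s(x) - \delta^s(y)|^2\, R(x)^2 + 2\,\delta^s(y)^2\, |R(x) - R(y)|^2
\]
so as to decouple the roughness of the weight from the interpolation error.

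This leaves three terms to estimate. (i) For $|\delta^s(x)-\delta^s(y)|^2 R(x)^2$, I would pull $R(x)^2 \le \|R\|^2_{L^\infty(\Omega)} \le C h^{2(\mu-\alpha)}\|w\|^2_{W^{\mu,\infty}}$ outside the integral and use $[\delta^s]^2_{H^s(\R^d)} < \infty$ from \Cref{lem:deltas in Hs0}. (ii) For $\delta^s(y)^2 |R(x)-R(y)|^2$, I would bound $\delta^s(y) \le \|\delta^s\|_{L^\infty(\Omega)}$ and apply the Hölder seminorm of $R$ to obtain
\[
\iint_{\Omega\times\Omega}\frac{|R(x)-R(y)|^2}{|x-y|^{d+2s}}\,\dy\,\dx \le [R]^2_{W^{\alpha,\infty}(\Omega)} \int_\Omega\!\int_{B(x,\operatorname{diam}\Omega)} |x-y|^{2\alpha - d - 2s}\,\dy\,\dx \le \frac{C}{\alpha - s}\, [R]^2_{W^{\alpha,\infty}(\Omega)},
\]
where the $1/(\alpha-s)$ factor arises from the radial integral $\int_0^{\operatorname{diam}\Omega} r^{2\alpha - 1 - 2s}\,\dd r$, and this is where the assumption $\alpha>s$ is crucial. (iii) For the boundary part, $\R^d\setminus\Omega \subset \R^d\setminus B(x,\dOmega(x))$ yields $\int_{\R^d\setminus\Omega}|x-y|^{-d-2s}\,\dy \le C\,\dOmega(x)^{-2s}$, and $\delta(x)^s \le c\,\dOmega(x)^s$ from \eqref{as:delta} cancels the singular weight, giving $\le C\int_\Omega R(x)^2\,\dx \le C\,\|R\|^2_{L^\infty(\Omega)}$. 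Summing the three contributions produces \eqref{eq:estcompeti1}.

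The estimate \eqref{eq:estcompeti2} then follows by optimizing $(\alpha-s)^{-1/2}\,h^{\mu-\alpha}$ in $\alpha$: a direct calculus exercise shows the minimum for small $h$ is attained at $\alpha = s + \tfrac{1}{2|\log h|}$, which lies in $(s,\min\{\mu,1\}]$ for $h$ small enough (since $s < \min\{\mu,1\}$) and produces the factor $h^{\mu-s}|\log h|^{1/2}$ with an absolute multiplicative constant. The main technical obstacle is the boundary accounting in step (iii): one must verify that the singular kernel's tail integrated outside $\Omega$ is absorbed by the vanishing of $\delta^s$ at exactly the rate $\dOmega^s$; the three-term split introduced above is what makes this cancellation transparent and avoids any need for finer boundary regularity of $R$ itself.
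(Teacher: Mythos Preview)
Your argument is correct and follows the same strategy as the paper: write $v-J_hv=\delta^s R$ with $R$ the interpolation error of $v/\delta^s$, decouple via the product-rule splitting into a $[\delta^s]_{H^s}\|R\|_{L^\infty}$ piece and a $\|\delta^s\|_{L^\infty}$ piece controlled by $[R]_{W^{\alpha,\infty}}/\sqrt{\alpha-s}$, invoke \Cref{lem:interpolation}, and then optimise with $\alpha=s+\tfrac{1}{2|\log h|}$. The only cosmetic difference is that the paper first extends $R|_\Omega$ to a compactly supported $W^{\alpha,\infty}(\R^d)$ function via $E_\alpha$ and packages your step~(ii) radial integral as \Cref{lem:HsWalpha_inclusion}, whereas you work directly with the zero extension and treat the $\Omega\times(\R^d\setminus\Omega)$ block by the $\delta^s\le c\,d_\Omega^s$ cancellation (the same computation as in \Cref{lem:deltas in Hs0}).
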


\begin{proof}
    Let us first note that we can write
    \[
        v-J_hv= \delta^s E_\alpha
        r_\Omega \normalcolor
        E_\mu \frac{v}{\delta^s}
        - \delta^s E_\alpha
        r_\Omega \normalcolor
        I_h E_\mu \frac{v}{\delta^s} = \delta^s E_\alpha
        r_\Omega \normalcolor
        \left(E_\mu\frac{v}{\delta^s} - I_h E_\mu \frac{v}{\delta^s}\right)
    \]
    Therefore, for all $\alpha \in(s,\min\{\mu,1\}]$, we can use Lemma \ref{lem:HsWalpha_inclusion} to get
    \begin{align*}
        [ v - J_h v ]_{H^s (\R^d)}
         & \le
        \|\delta^s\|_{L^\infty(\R^d)} \left[E_\alpha
            r_\Omega \normalcolor
            [E_\mu \frac{v}{\delta^s} - I_h E_\mu \frac{v}{\delta^s} ]\right]_{H^s (\R^d)}
        +
        [\delta^s]_{H^s(\R^d)} \left\| E_\alpha
        r_\Omega \normalcolor
        [ E_\mu\frac{v}{\delta^s} - I_h E_\mu \frac{v}{\delta^s} ] \right\|_{L^\infty (\R^d)}                                                                                                 \\
         & \lesssim \frac{\|\delta^s\|_{L^\infty(\R^d)} }{\sqrt{\alpha-s}} \left\|E_\alpha
        r_\Omega \normalcolor
        [E_\mu \frac{v}{\delta^s} - I_h E_\mu \frac{v}{\delta^s} ]\right\|_{W^{\alpha,\infty} (\R^d)}
        +
        [\delta^s]_{H^s(\R^d)} \left\|  E_\mu\frac{v}{\delta^s} - I_h E_\mu \frac{v}{\delta^s} \right\|_{
                W^{\alpha,\infty} (\Omega) \normalcolor
        }                                                                                                                                                                                     \\
         & \lesssim \frac{\|\delta^s\|_{L^\infty(\R^d)} }{\sqrt{\alpha-s}}\left\|E_\mu \frac{v}{\delta^s} - I_h E_\mu \frac{v}{\delta^s} \right\|_{W^{\alpha,\infty} (
                \Omega \normalcolor
                )}
        +
        [\delta^s]_{H^s(\R^d)} \left\|  E_\mu\frac{v}{\delta^s} - I_h E_\mu \frac{v}{\delta^s} \right\|_{
                W^{\alpha,\infty} (\Omega) \normalcolor
        }                                                                                                                                                                                     \\
         & \leq \frac{ \|\delta^s\|_{L^\infty(\R^d)} +[\delta^s]_{H^s(\R^d)}  }{\sqrt{\alpha-s}} \left\|E_\mu \frac{v}{\delta^s} - I_h E_\mu \frac{v}{\delta^s} \right\|_{W^{\alpha,\infty} (
                \Omega \normalcolor
                )},
    \end{align*}
    where we have used the symbol $\lesssim$ to express that the inequality holds up to a constant that depends only on $\Omega,d,s$ and $\mu$.
    Due to \Cref{lem:interpolation} we have that
    \[
        \left\|E_\mu \frac{v}{\delta^s} - I_h E_\mu \frac{v}{\delta^s} \right\|_{
                W^{\alpha,\infty} (\Omega)
                \normalcolor
            } \lesssim h^{\mu-\alpha} \left\|E_\mu \frac{v}{\delta^s}\right\|_{
                W^{\alpha,\infty} (\Omega)
                \normalcolor
            } \simeq h^{\mu-\alpha} \|\frac{v}{\delta^s}\|_{W^{\mu,\infty} ({\Omega})}.
    \]
    The proof of \eqref{eq:estcompeti1} is completed. For \eqref{eq:estcompeti2}, it is enough to take the sequence $\alpha_h=s+\frac{1}{2\lvert\log h\rvert}$ for $h$ small enough such that $\alpha_h\in (s,\min\{\mu,1\}]$, and use \eqref{eq:estcompeti1}.
\end{proof}
We are now ready to prove our main result of convergence.
\begin{proof}[Proof of  \Cref{thm:convergence rate Hs}]
    It is enough to combine \eqref{eq:Cea lemma} with the competitor estimate of  \eqref{eq:competitor} to get
    \begin{equation*}
        [ u - u_h ]_{H^s (\Rd)}  = \inf_{v_h \in V_h} [u - v_h ]_{H^s (\R^d)} \le [u - J_h u ]_{H^s(\R^d)},
    \end{equation*}
    and hence the desired error estimate \eqref{eq:Hs rate}.
\end{proof}

\begin{proof}[Proof of  \Cref{cor:convergence rate from regularity of f}]
    Let us first notice that, by Proposition 1.1 in \cite{RosOtonSerra2014} we have $u \in C^s(\mathbb{R}^d)$ and
    \begin{equation} \label{eq: Cs regularity}
        \| u \|_{W^{s,\infty}(\mathbb{R}^d)} \leq C_{\Omega, s} \| f \|_{L^\infty(\Omega)},
    \end{equation}
    where $C_{\Omega, s}$ is a positive constant depending only on the domain $\Omega$ and $s$.
    Applying the results of \cite{AbatangeloRos-Oton2020}, summarised in \eqref{eq:AbatangeloRosOton results}, we have $u/\delta^s \in W^{\gamma, \infty}(\Omega)$ with the inequality
    \begin{equation*}
        \| u/\delta^s\|_{W^{\gamma, \infty}(\Omega)} \leq C_{\Omega, d, s, \gamma} (\| f \|_{W^{\gamma - s, \infty}(\Omega)} + \| u\|_{L^\infty(\mathbb{R}^d)}),
    \end{equation*}
    for a positive constant $C_{\Omega, d, s, \gamma}$ depending only on $\Omega, d, s$ and $\gamma$. Inequality \eqref{eq: Cs regularity} therefore implies
    \begin{equation} \label{eq: Wgamma stability}
        \| u/\delta^s\|_{W^{\gamma, \infty}(\Omega)} \leq C_{\Omega, d, s, \gamma} \| f \|_{W^{\gamma - s, \infty}(\Omega)}.
    \end{equation}
    We can hence apply \Cref{thm:convergence rate Hs} with $\mu = \min \{ \gamma, 2 \}$, combining \eqref{eq: estcorollary} together with  \eqref{eq: Wgamma stability} to get the final result \eqref{eq: estcorollary}.
\end{proof}

\normalcolor

\subsection{Proof of convergence rates in \texorpdfstring{$L^2$}{L2}}
The standard approach to translate this $H^s$-rate to an improved $L^p$-rate is to use an Aubin-Nitsche duality argument.
The idea is to consider $e_h = u - u_h$ and note that for any $w_h \in V_h$ we have that
\[
    a(e_h, w_h) = a(u, w_h) - a(u_h, w_h) = \int_\Omega f(x)w_h(x)\dx - \int_\Omega f(x)w_h(x)\dx = 0.
\]
Thus, given any $\phi_h$, we consider the problem
\begin{equation}
    \label{eq:psih}
    (-\Delta)^s \psi_h = \phi_h \text{ in } \Omega,
    \qquad
    \psi_h = 0 \text{ in } \Rd \setminus \Omega,
\end{equation}
and note that for any $w_h \in V_h$, using the weak formulation of \eqref{eq:psih}, we have that
\begin{equation*}
    \begin{aligned}
        \int_\Omega e_h(x) \phi_h(x) \dx
         & = a(e_h, \psi_h) = a(e_h, \psi_h - w_h) \le [e_h]_{H^s(\R^d)} [\psi_h - w_h]_{H^s(\R^d)}.
    \end{aligned}
\end{equation*}
Therefore, we get the following result %
\begin{equation}
    \label{eq:Aubin-Nietsche key fact}
    \int_\Omega e_h(x) \phi_h(x) \dx \le [e_h]_{H^s(\R^d)} \inf_{w_h \in V_h} [\psi_h - w_h]_{H^s(\R^d)}.
\end{equation}
Due to Céa's lemma (\Cref{lem:Cea}) the infimum on the right-hand side is attained at the numerical solution of \eqref{eq:psih}.

\begin{remark}
    \label{rem:L2 rates}
    An interesting open question is whether \Cref{cor:convergence rate from regularity of f} could be improved for $f \in L^2(\Omega)$ to a result analogous to the classical case $s=1$, i.e.,
    \begin{equation*}
        [u - u_h]_{H^s(\mathbb{R}^d)} \le C h^{s} \lvert\log h\rvert^{\kappa} \|f\|_{L^2(\Omega)}.
    \end{equation*}
    In the spirit of \Cref{thm:convergence rate Hs}, this would mean that $u/\delta^s$ has ``$2s$ derivatives''.
    If this was the case we could prove, using $\phi_h = e_h$ and \eqref{eq:Aubin-Nietsche key fact}, an improved $L^2$-error estimate of order $h^2$, which is consistent with the rates suggested by the numerical experiments below.
\end{remark}

\begin{proof}[Proof of \Cref{cor:convergence rate in L2}]Let $e_h=u-u_h$. Using the Fourier transform $\mathcal F$, let us consider
    \begin{equation*}
        \phi_h \coloneqq \mathcal{F}^{-1}\!\big[(1 + |\xi|^2)^{-t}\, \mathcal{F}[e_h]\big].
    \end{equation*}
    This corresponds to applying the  operator $(I-\Delta)^{-t}$ in $\mathbb{R}^d$ to $e_h$. Let us also consider the following equivalent norm in $H^{r}(\mathbb{R}^d)$ for any $r \in \mathbb{R}$, given by
    \begin{equation*}
        \triplenorm{v}_{H^r(\mathbb{R}^d)}
        \coloneqq \|(1 + |\xi|^2)^{r/2}\, \mathcal{F}[v]\|_{L^2(\mathbb{R}^d)}.
    \end{equation*}
    We deduce, by Plancherel's theorem, that $\phi_h \in H^{s+2t}(\mathbb{R}^d)$ and $\triplenorm{\phi_h}_{H^{s+2t}(\mathbb{R}^d)}
        =  \triplenorm{e_h}_{H^s(\mathbb{R}^d)}$. Thus, by
    Morrey’s inequality, we have that $\phi_h \in W^{s+2t - \frac{d}{2},\infty}(\Omega)$
    provided that  $s + 2t - \tfrac{d}{2} > 0$. In particular
    \begin{equation*}
        \|\phi_h\|_{W^{s+2t - \frac{d}{2},\infty}(\Omega)}
        \leq C \triplenorm{\phi_h}_{H^{s+2t}(\mathbb{R}^d)}
        = C \triplenorm{e_h}_{H^s(\mathbb{R}^d)}, \quad \textup{for all } t>\frac{d-2s}{4}.
    \end{equation*}
    Let us now consider WFEM solution corresponding to \eqref{eq:psih}, that is
    \begin{equation*}
        \text{Find }  \Psi_h \in V_h \text{ such that } a(\Psi_h, v_h) = \int_\Omega \phi_h(x) v_h(x) \dx \text{ for all } v_h \in V_h.
    \end{equation*}
    We use now \Cref{cor:convergence rate from regularity of f} with $\overline{\gamma}=2s+2t-\frac{d}{2}$, provided $t$ is such that  $\overline{\gamma},\overline{\gamma}\pm s\not\in \N$, and $0<\overline{\gamma}-s<2-s$ to get
    \begin{equation*}
        \| \psi_h - \Psi_h \|_{H^s(\R^d)} \le C h^{\overline{\gamma}-s} \lvert\log h\rvert^{\frac 1 2} \|\phi_h\|_{W^{\overline{\gamma}-s,\infty}}
        \le C h^{\overline{\gamma}-s} \lvert\log h\rvert^{\frac 1 2} \|e_h\|_{H^s(\R^d)}
    \end{equation*}
    where have used that $\overline{\gamma}-s=s+2t-\frac d 2$. Note that we have used here the hypothesis $\partial \Omega \in C^{2,1}$.
    On the other hand, we can use Parceval's identity and proceed as in \eqref{eq:Aubin-Nietsche key fact} to conclude that
    \begin{align*}
        \triplenorm{e_h}_{H^{-t}(\R^d)}^2 & =\int_{\Rd} e_h(x) \phi_h(x)\dx =\int_\Omega e_h(x) \phi_h(x)\dx = a(e_h,\psi_h) = a(e_h,\psi_h-\Psi_h) \\
                                          & \leq  \|e_h\|_{H^{s}(\R^d)}\|\psi_h-\Psi_h\|_{H^{s}(\R^d)}                                              \\
                                          & \leq C h^{\overline{\gamma}-s} \lvert\log h\rvert^{\frac 1 2} \|e_h\|_{H^s(\R^d)}^2.
    \end{align*}
    Finally, let us note that with our chosen norms we have the following interpolation inequality if $-t \theta + s (1 - \theta) = 0$ (i.e. $\theta=\frac{s}{s+t}$):
    \begin{align*}
        \|e_h\|_{L^2(\Rd)} & \le \triplenorm{e_h}_{H^{-t}(\Rd)}^\theta \triplenorm{e_h}_{H^s(\Rd)}^{1-\theta}                           \\
                           & \leq C h^{\frac{\theta}{2}(\overline{\gamma}-s)} \lvert\log h\rvert^{\frac \theta 4} \|e_h\|_{H^s(\R^d)} .
    \end{align*}
    Summarizing, we have obtained that
    \[
        \|e_h\|_{L^2(\Rd)} \leq C h^{\frac{1}{2}\frac{s}{s+t}(\overline{\gamma}-s)} \lvert\log h\rvert^{\frac{1}{4}\frac{s}{s+t}} \|e_h\|_{H^s(\R^d)}
    \]
    with $\overline{\gamma}=2s+2t-\frac{d}{2}$ provided $t$ is such that  $\overline{\gamma},\overline{\gamma}\pm s\not\in \N$, and $0<\overline{\gamma}-s<2-s$. Note that, in particular, we have
    \[
        \|e_h\|_{L^2(\Rd)} \leq C h^{\frac{1}{2}\frac{s}{s+t}(s+2t-\frac{d}{2})} \lvert\log h\rvert^{\frac{1}{4}\frac{s}{s+t}} \|e_h\|_{H^s(\R^d)}
    \]
    for almost every $t\in \left(\frac{d-2s}{4},\frac{d-2s+2}{4} \right)$. This implies
    \[
        \|e_h\|_{L^2(\Rd)} \leq C h^{\alpha} \lvert\log h\rvert^{\frac{1}{4}} \|e_h\|_{H^s(\R^d)}
    \]
    for almost every $\alpha\in \left(0, s\frac{4-2s}{4+d} \right)$. Applying the estimate in \Cref{cor:convergence rate from regularity of f} we get the result.
\end{proof}

\section{Extensions and open problems}\label{sec:extOp}

In this section, we discuss several natural extensions of this work and highlight open questions whose resolution could improve the results presented in this paper.

\paragraph{Parabolic problems.} Some authors have studied higher regularity of $u$ and $u/\delta^s$ to the parabolic problem $\partial_t u + (-\Delta)^s u = f$ see \cite{FernandezRealRosOton2016,FernandezRealRosOton2017}.
Our approach has a standard generalisation to this context, see e.g., \cite{QuarteroniValli1994}.

\paragraph{Higher-order basis.}
Since the regularity of $u/\delta^s$ can be arbitrarily high (see \eqref{eq:AbatangeloRosOton results})
a possibility in these case would be to use higher order elements to achieve higher order convergence rates. A close inspection to the proofs reveals that for a basis of elements of order $k\in \mathbb{N}$, the order of convergence in the $s$-Sobolev norm can be $h^{k+1-s}$. \normalcolor

\paragraph{Generalised fractional Laplacians.}
The approach of this paper relies on the observation that solutions of \eqref{eq:fracDirProb} satisfy that the quotient $u/\delta^s$ is highly regular. This feature also appears in problems where the fractional Laplacian is replaced by more general nonlocal operators of the form
\[
    L\phi(y) = \textup{P.V.} \int_{\mathbb{R}^d} \big( \phi(x) - \phi(y) \big) K(x-y)\,\dx,
\]
under suitable structural conditions on the kernel $K$ (see \cite{AbatangeloRos-Oton2020,RosOton2016,RosOton2016_2,RosOtonSerra2017}). Consequently, weighted finite element methods, such as the one proposed here, are naturally extendable to this broader class of operators.

\paragraph{Singular kernels and related operators}
Another interesting family of problems is
\[
    (-\Delta)^s u(x) + V(x) u(x) = f(x),
\]
where the singularity of the potential $V$ at the boundary may alter the boundary behaviour of $u$.

On the one hand, for $s>1/2$ if we consider the so-called ``killing'' potential
\[
    \kappa(x) = C_{d,s} \int_{\Rd \setminus \Omega} |x-y|^{-d-2s} \dy \asymp d_\Omega (x)^{-2s}
\]
then $V(x) = -\kappa(x)$ gives the so-called Censored Fractional Laplacian $(-\Delta)_{\textup{CFL}}^s \defeq (-\Delta)^s + V$, which can equivalently be written as
\begin{equation*}
    (-\Delta)_{\textup{CFL}}^s u(x) = C_{d,s} \textup{P.V.} \int_\Omega  \frac{u(x) - u(y)}{|x-y|^{d+2s}}\dy.
\end{equation*}
In this setting it is known that for $f$ non-negative and bounded we have $u \asymp d_\Omega^{2s-1}$. The theory of regularity for $u / d_\Omega^{2s-1}$ is available in \cite{FallRosOton2021},
and the natural weighted basis for a finite elements theory like the one presented in this paper is $\delta^{2s-1} \plinear\triangulation$.

On the other hand, for any $\varepsilon \in (0,s)$ we can find $C(\varepsilon) > 0$ such that, if $V \ge C(\varepsilon) d_\Omega^{-2s}$, then $|u| \le C d_\Omega^{s+\varepsilon}$ near $\partial \Omega$ (see \cite{DiazGomezCastroVazquez2018}).
In this setting it would be interesting to see whether using the suitably weighted basis gives any improvements.

\paragraph{Semi-linear problems.}
Another interesting family of problems is $(-\Delta)^s u = f(u)$.
Depending on the behaviour of $f$ at $u = 0$ we have different behaviour at the boundary (see \cite{abatangeloLargesharmonicFunctions2015}).

\paragraph{Fractional $p$-Laplacian.}
The scheme proposed in this paper naturally extends to problems involving the fractional $p$-Laplacian,
\begin{equation*}
    (-\Delta)^s_p u(x) = C_{d,s,p}\,\operatorname{P.V.} \int_{\mathbb{R}^d} \frac{|u(x)-u(y)|^{p-2}(u(x)-u(y))}{|x-y|^{d+sp}}\dy.
\end{equation*}
Nevertheless, convergence rates cannot be established due to the absence of higher-order regularity results for $u/\delta^s$ comparable to those available for the case $p=2$.
Several authors have addressed regularity in this setting (see, e.g.,  \cite{BrascoLindgren2017,BrascoLindgrenSchikorra2018} and \cite{iannizzottoSurveyBoundaryRegularity2024} for a survey).
$C^\alpha$ regularity for $u/\delta^s$ can be found in \cite{IannizzottoMosconiSquassina2020,iannizzottoFineBoundaryRegularity2024}.
$C^{1,\alpha}$ regularity for local solutions $u$ with $f = 0$ has appeared as a preprint
\cite{GiovagnoliJesusSilvestre2025}.
We also note that a finite elements method with piece-wise linear basis was studied in \cite{BorthaLiNoc24,Otarola2024}.

\paragraph{Improved regularity gains.}

An interesting open question is whether the regularity gain in \eqref{eq:AbatangeloRosOton results} can be improved from $s$ to $2s$ derivatives. Such an improvement would yield the optimal $H^s$-convergence rate of order $2-s$ (see \Cref{cor:convergence rate from regularity of f}) under relaxed regularity requirements on the right-hand side $f$.

Another open question, discussed in \Cref{rem:L2 rates}, concerns the regularity of $u/\delta^s$ in the $H^\sigma$ scale for data $f \in L^2(\Omega)$. Note that, in this setting, the standard interpolant $I_h$ cannot be employed in high dimensions, and quasi-interpolants must be used instead.

\paragraph{Less regular domains.}
Throughout the paper we work with domains $\Omega$ that are, at least, $C^{1,1}$. This regularity is used to apply the Hölder regularity theory of $u/\delta^s$, to prove $\delta^s \in V$ (in particular to show $\delta^{s-1} \in L^1(\Omega)$), and for the existence suitable extension operator in $W^{1,\infty}(\Omega)$.
The latter technical facts probably hold under less regularity of $\Omega$, and hence \Cref{thm:convergence rate Hs} can be sharpened. However, to weaken the regularity assumption of $\Omega$ in \Cref{cor:convergence rate from regularity of f} would require improving the regularity theory of $u/\delta^s$.

\paragraph{Céa's lemma in Hölder spaces.} The results in this paper rely on $C^{\alpha}$ regularity estimates to derive $H^s$-type convergence rates. This approach appears to be suboptimal, as it requires passing from $C^{\alpha}$ to $H^{\alpha-\varepsilon}$ (and vice versa via Morrey’s embedding) incurring a loss of regularity.

It would be of interest to establish a Céa-type lemma in Hölder spaces $C^s(\Omega)$. Such a result would provide a natural framework for $C^s$ and $L^\infty$-error estimates. This kind of result is known in the classical case $s=1$ (see \cite{LarsScharleSuli2021}).

\normalcolor

\section{Numerical Experiments}

We now present numerical experiments in one dimension.
In order to compute the entries of the stiffness matrix $A_{i,j} = a(\phi_j, \phi_i)$ we have used a quadrature of the singular integral, that we will discuss in an upcoming work \cite{DelTesoCastroFronzoni2025}.
The experiments have been implemented in \texttt{julia} 1.11.7, and
the codes are available at
\begin{quote}
    \url{https://github.com/dgomezcastro/FEMFractionalQuadrature.jl}.
\end{quote}
Other approaches to approximate the stiffness matrix can be found in \cite{Bonito2019, Sheng2025}.

\subsection{Boundary accuracy of the interpolator \texorpdfstring{$I_h$}{Ih} and \texorpdfstring{$J_h$}{Jh}}

Let $u^*$ be the explicit solution in \eqref{eq:explicitSol} with $d=1$, $R=1$, and $x_0 = 0$, i.e., $\Omega = (-1,1)$ and $f = 1$.
In this case we can take $\Omega_h = \Omega$.
In \Cref{fig: interpolation 1d}, we can first qualitatively observe the improvement in the interpolation between $I_h u^*$ (piece-wise linear interpolation) and $J_h u^*$ (our tailored interpolation) for different choices of the regularised version of $d_\Omega$, denoted by $\delta$ throughout the paper, and a fixed $h$.

Although the overall solutions appear similar in
Figures~\ref{fig: interpolation 1d}%
\subref{fig:s=0.1 plot solution whole domain}
to~\subref{fig:s=0.6 plot solution whole domain},
a closer inspection near the boundary (see Figures \ref{fig: interpolation 1d}%
\subref{fig:s=0.1 plot solution zoom}
to~\subref{fig:s=0.6 plot solution zoom}) clearly reveals the remarkable capability of WFEM in capturing the behaviour of the exact solution.
In particular, we can observe how the solution obtained with WFEM closely matches the profile of $u^*$ near the boundary.
Since the $J_h u^*$ associated to $\delta(x) = 1-|x|^2$ and $\delta(x)=1-|x|^4$ seem equivalent in
Figures~\ref{fig: interpolation 1d}\subref{fig:s=0.1 plot solution zoom}
to~\subref{fig:s=0.6 plot solution zoom}, we provide
Figures~\ref{fig: interpolation 1d}\subref{fig:s=0.1 plot I_h(u/delta^s)}
to~\subref{fig:s=0.6 plot I_h(u/delta^s)} where we show $I_h(u^*/\delta^s) = J_h(u^*) / \delta^s$.
\normalcolor

\begin{figure}[p]
    \centering

    \begin{subfigure}{0.32 \textwidth}
        \centering
        \includegraphics[width = \textwidth]{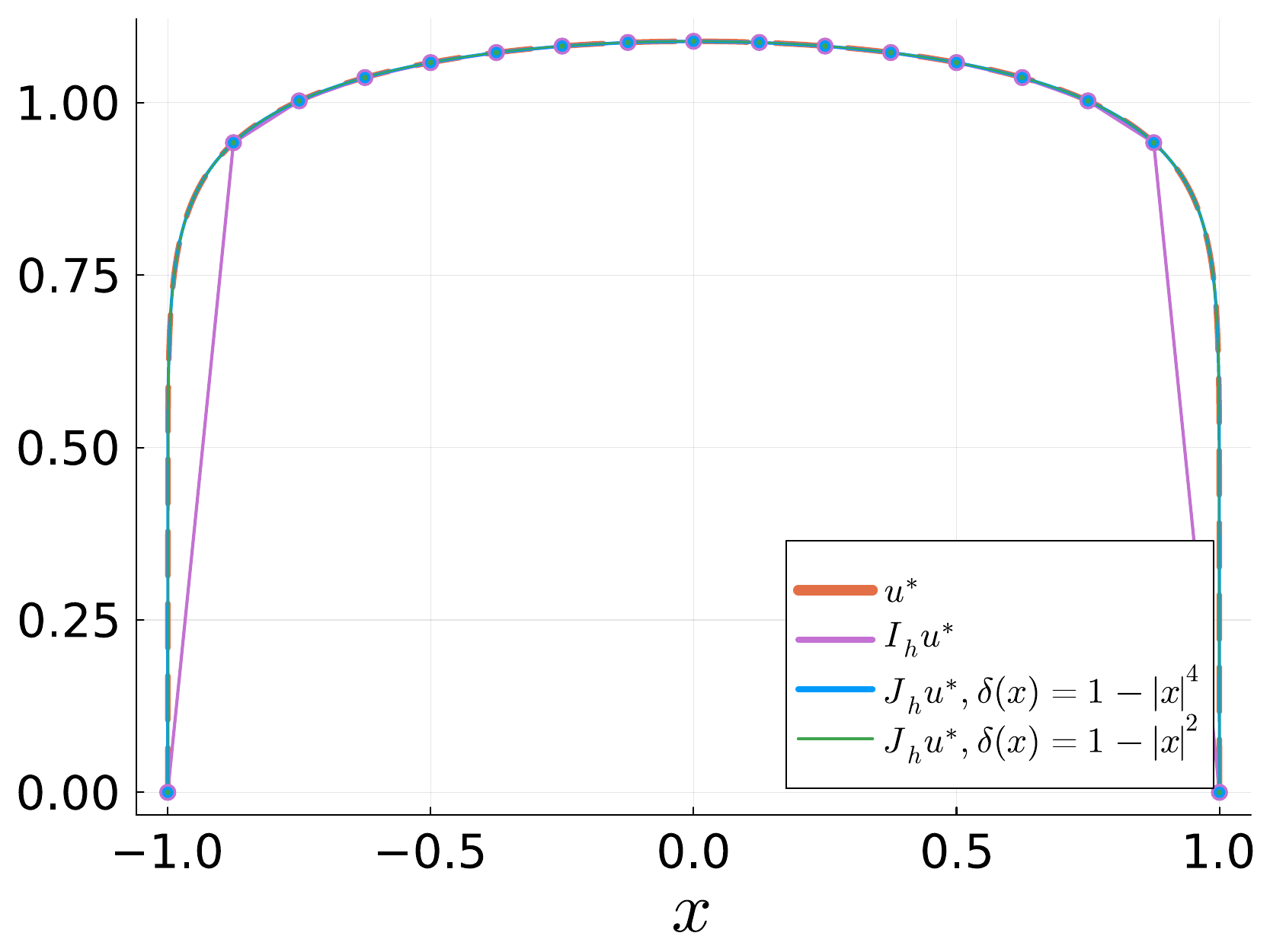}
        \caption{ $s=0.1$  }
        \label{fig:s=0.1 plot solution whole domain}
    \end{subfigure}
    \hfill
    \begin{subfigure}{0.32 \textwidth}
        \centering
        \includegraphics[width = \textwidth]{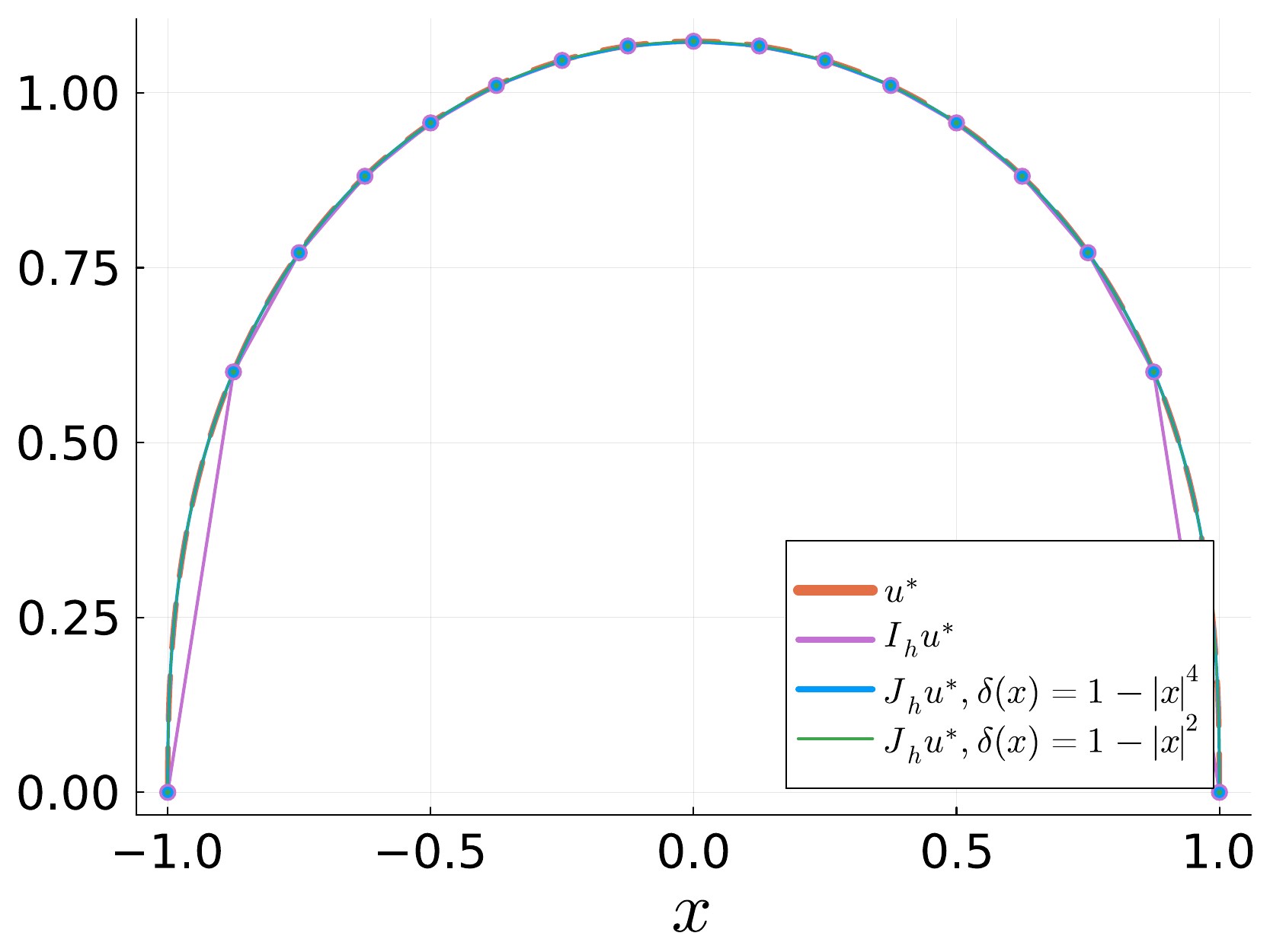}
        \caption{ $s=0.4$}
        \label{fig:s=0.4 plot solution whole domain}
    \end{subfigure}
    \hfill
    \begin{subfigure}{0.32 \textwidth}
        \centering
        \includegraphics[width = \textwidth]{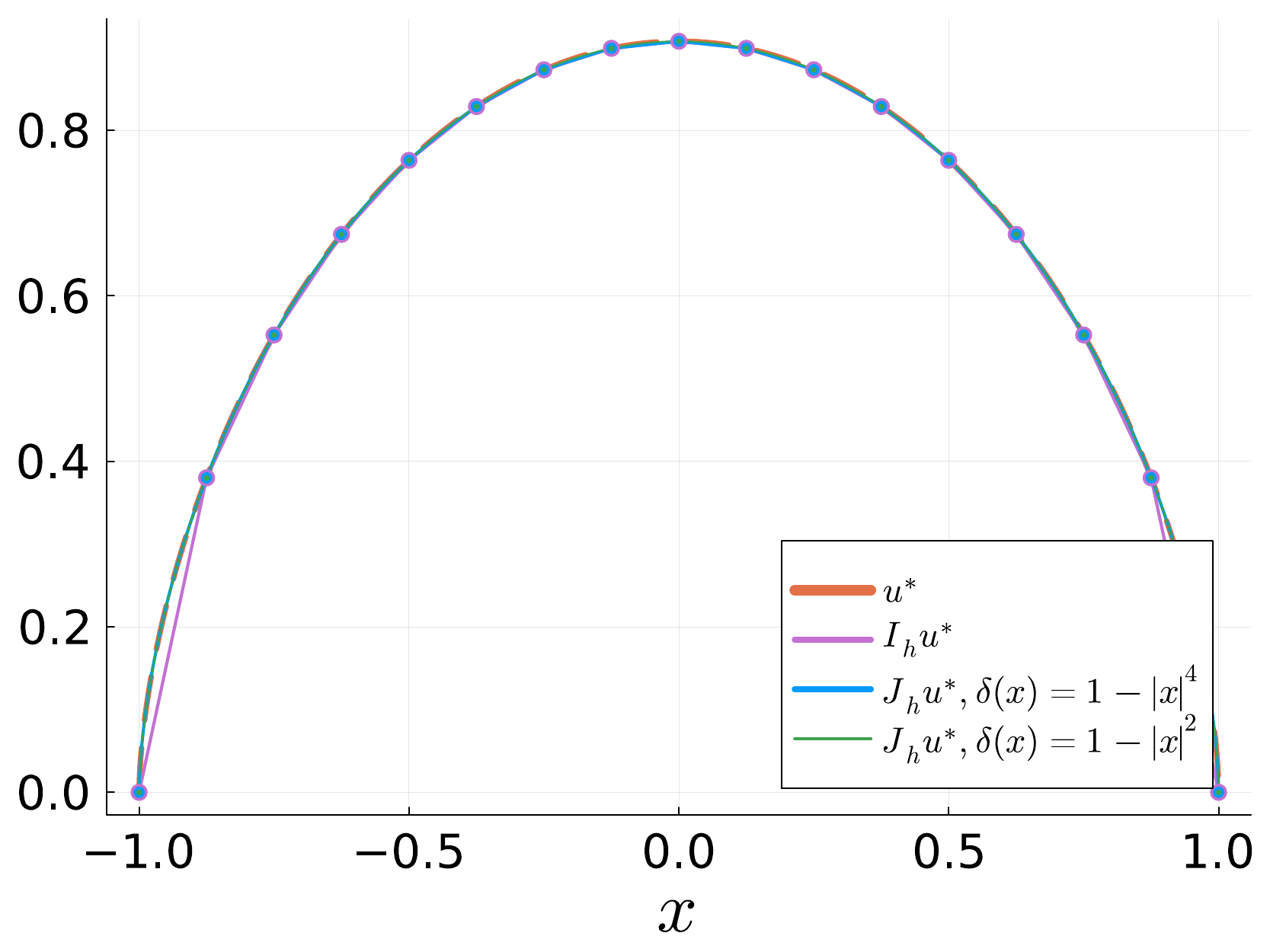}
        \caption{ $s=0.6$}
        \label{fig:s=0.6 plot solution whole domain}
    \end{subfigure}
    \hfill
    \begin{subfigure}{0.32 \textwidth}
        \centering
        \includegraphics[width = \textwidth]{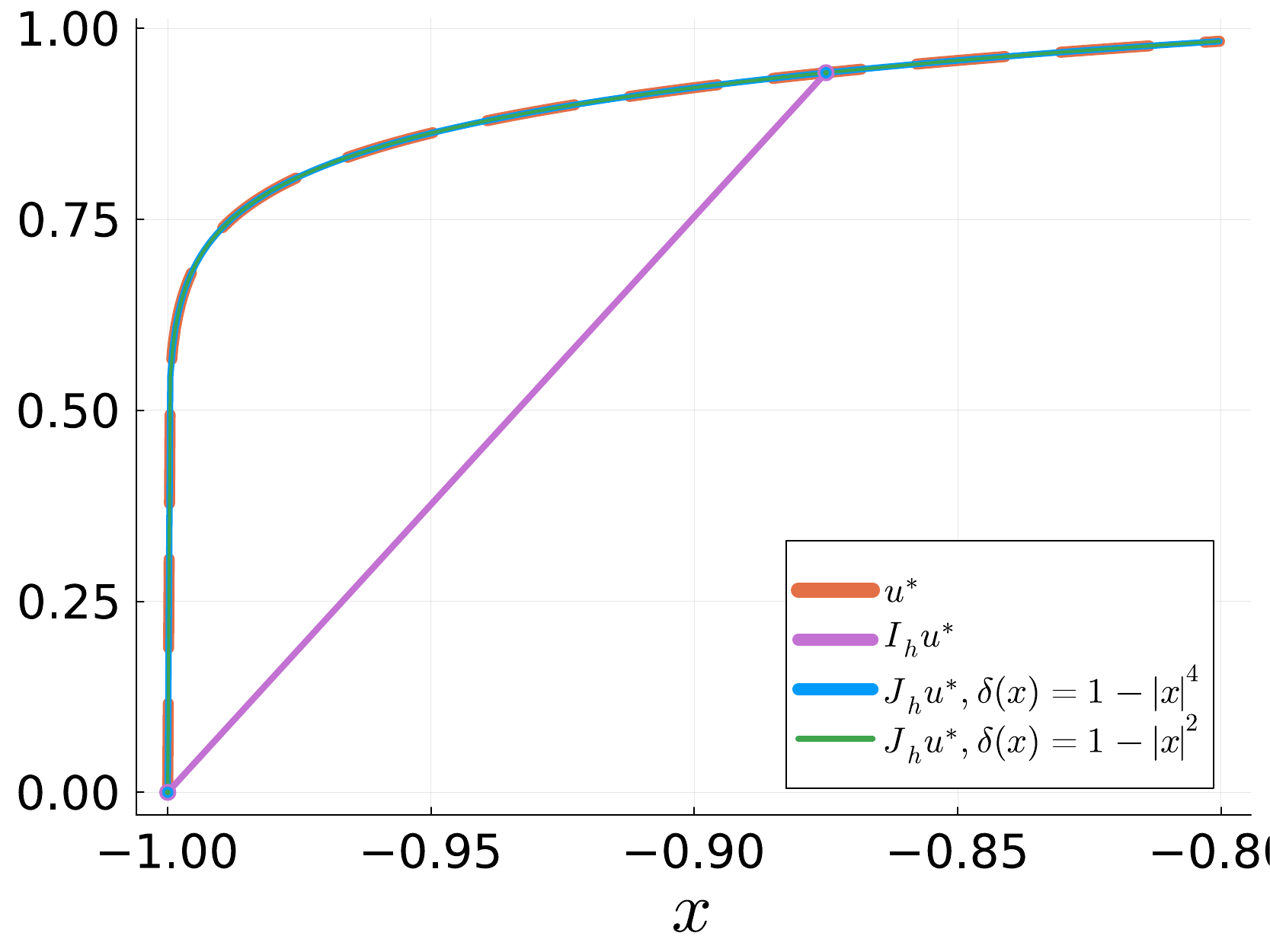}
        \caption{ $s=0.1$, zoom at the boundary }
        \label{fig:s=0.1 plot solution zoom}
    \end{subfigure}
    \hfill
    \begin{subfigure}{0.32 \textwidth}
        \centering
        \includegraphics[width = \textwidth]{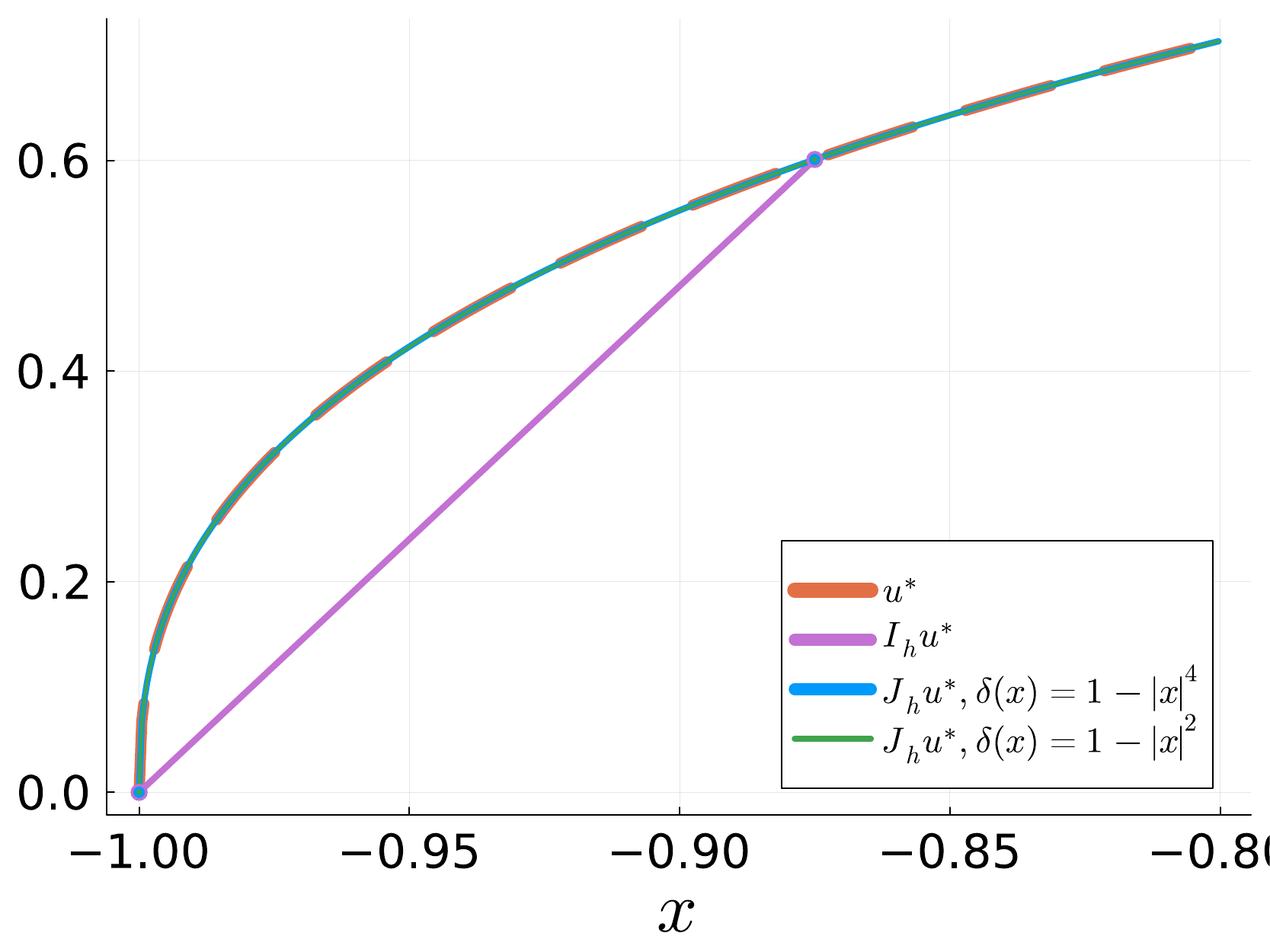}
        \caption{  $s=0.4$, zoom at the boundary }
        \label{fig:s=0.4 plot solution zoom}
    \end{subfigure}
    \hfill
    \begin{subfigure}{0.32 \textwidth}
        \centering
        \includegraphics[width = \textwidth]{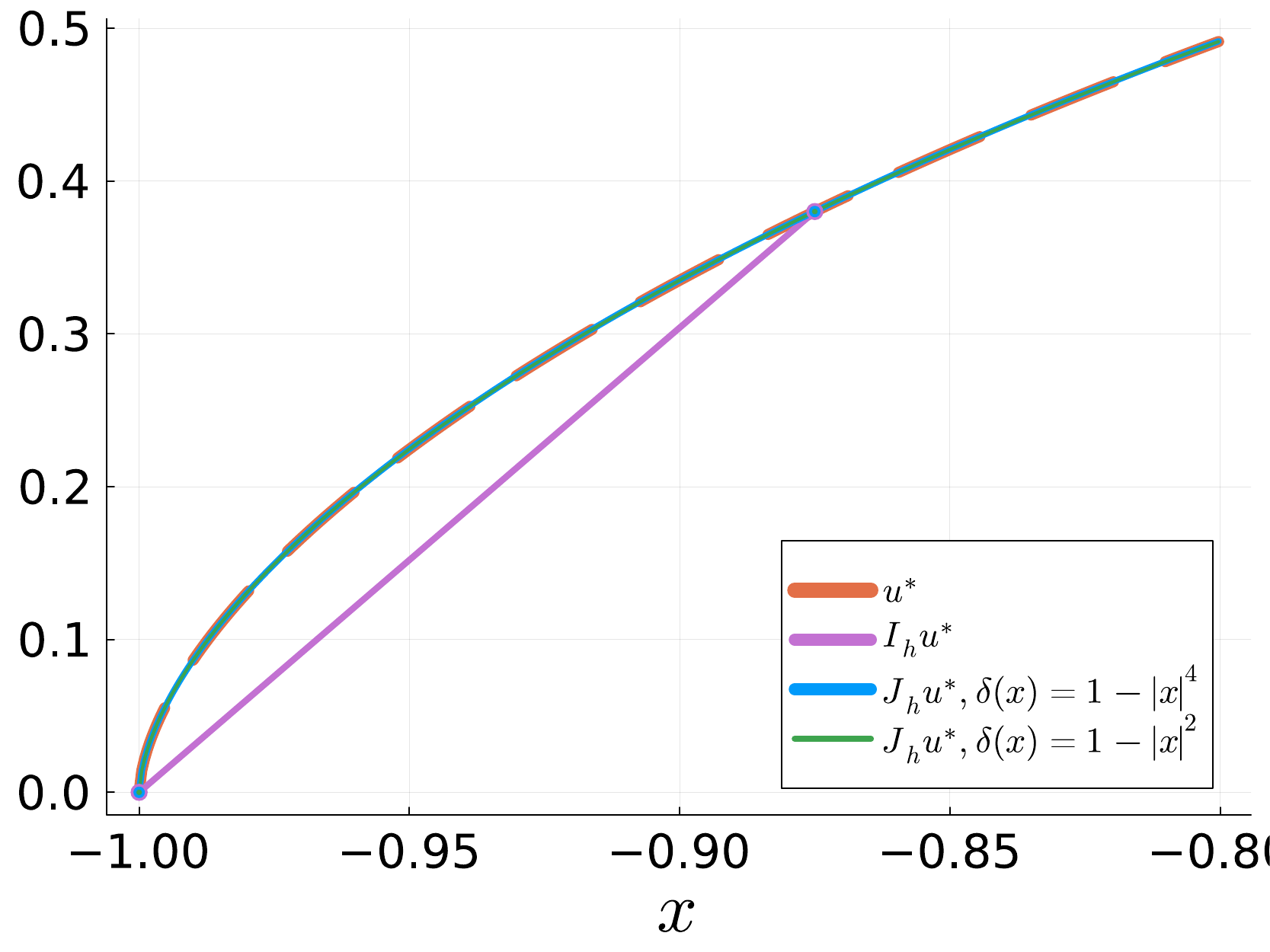}
        \caption{  $s=0.6$, zoom at the boundary }
        \label{fig:s=0.6 plot solution zoom}
    \end{subfigure}
    \hfill
    \begin{subfigure}{0.32 \textwidth}
        \centering
        \includegraphics[width = \textwidth]{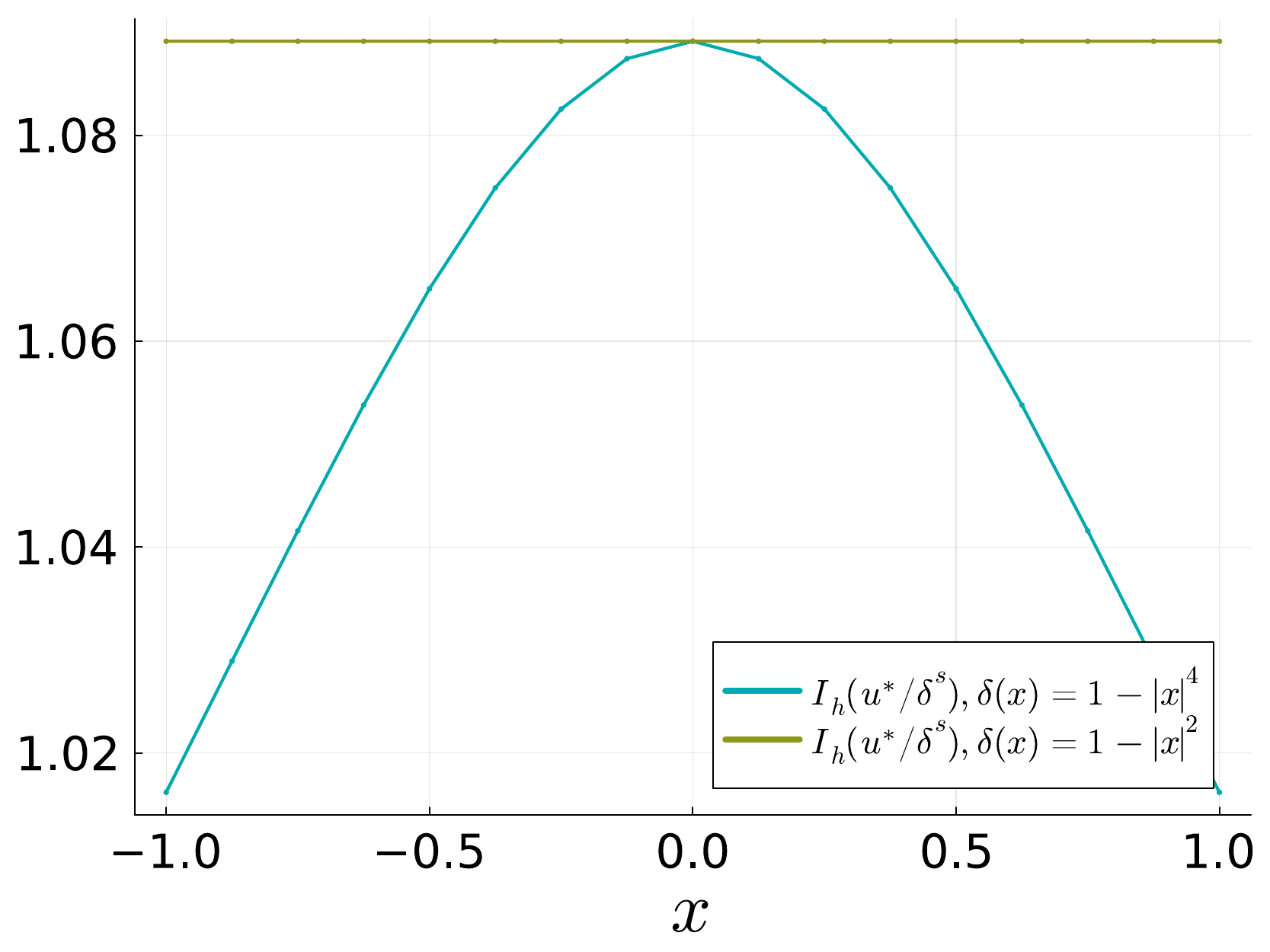}
        \caption{ $s=0.1$}
        \label{fig:s=0.1 plot I_h(u/delta^s)}
    \end{subfigure}
    \hfill
    \begin{subfigure}{0.32 \textwidth}
        \centering
        \includegraphics[width = \textwidth]{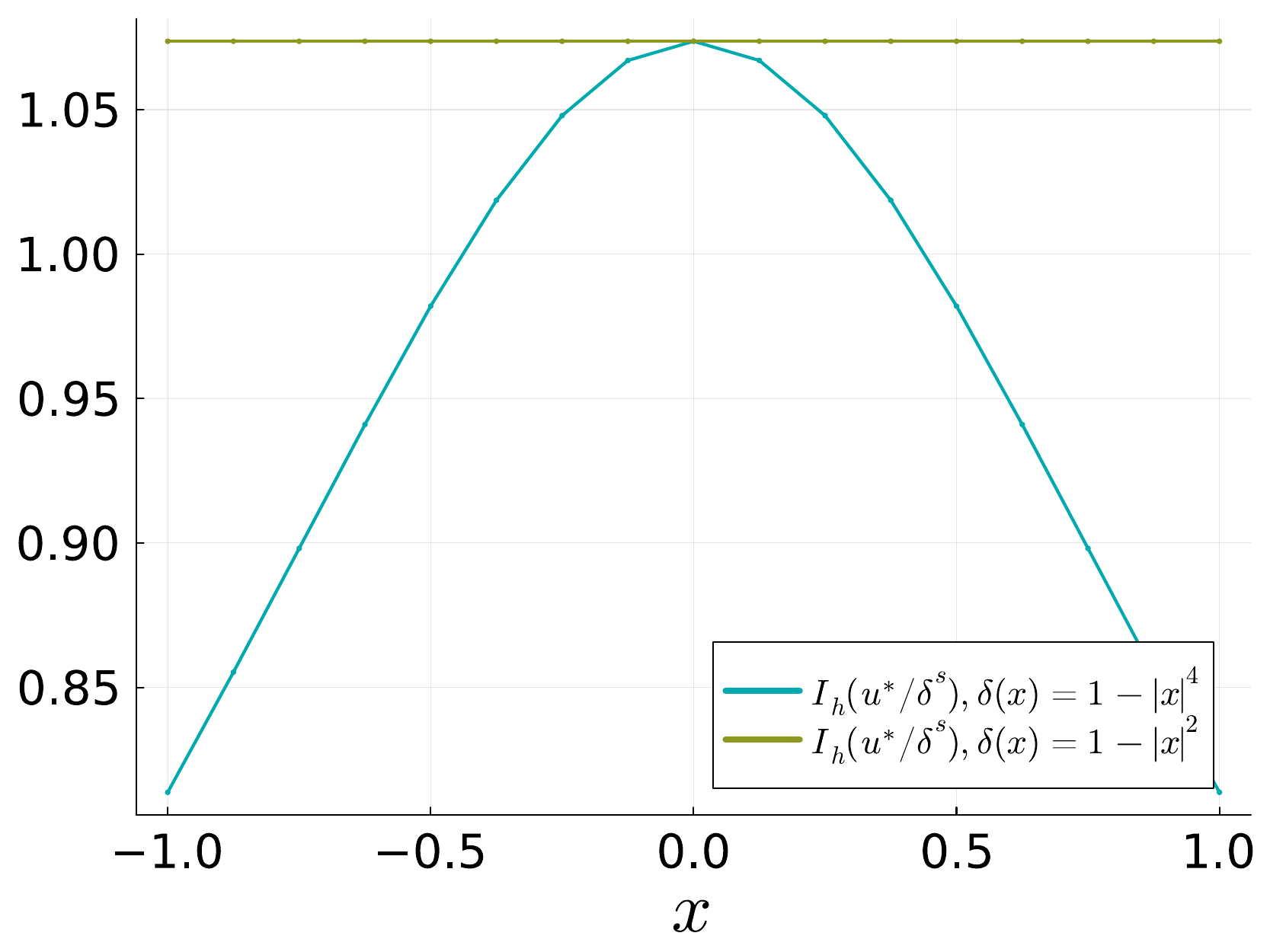}
        \caption{  $s=0.4$}
        \label{fig:s=0.4 plot I_h(u/delta^s)}
    \end{subfigure}
    \hfill
    \begin{subfigure}{0.32 \textwidth}
        \centering
        \includegraphics[width = \textwidth]{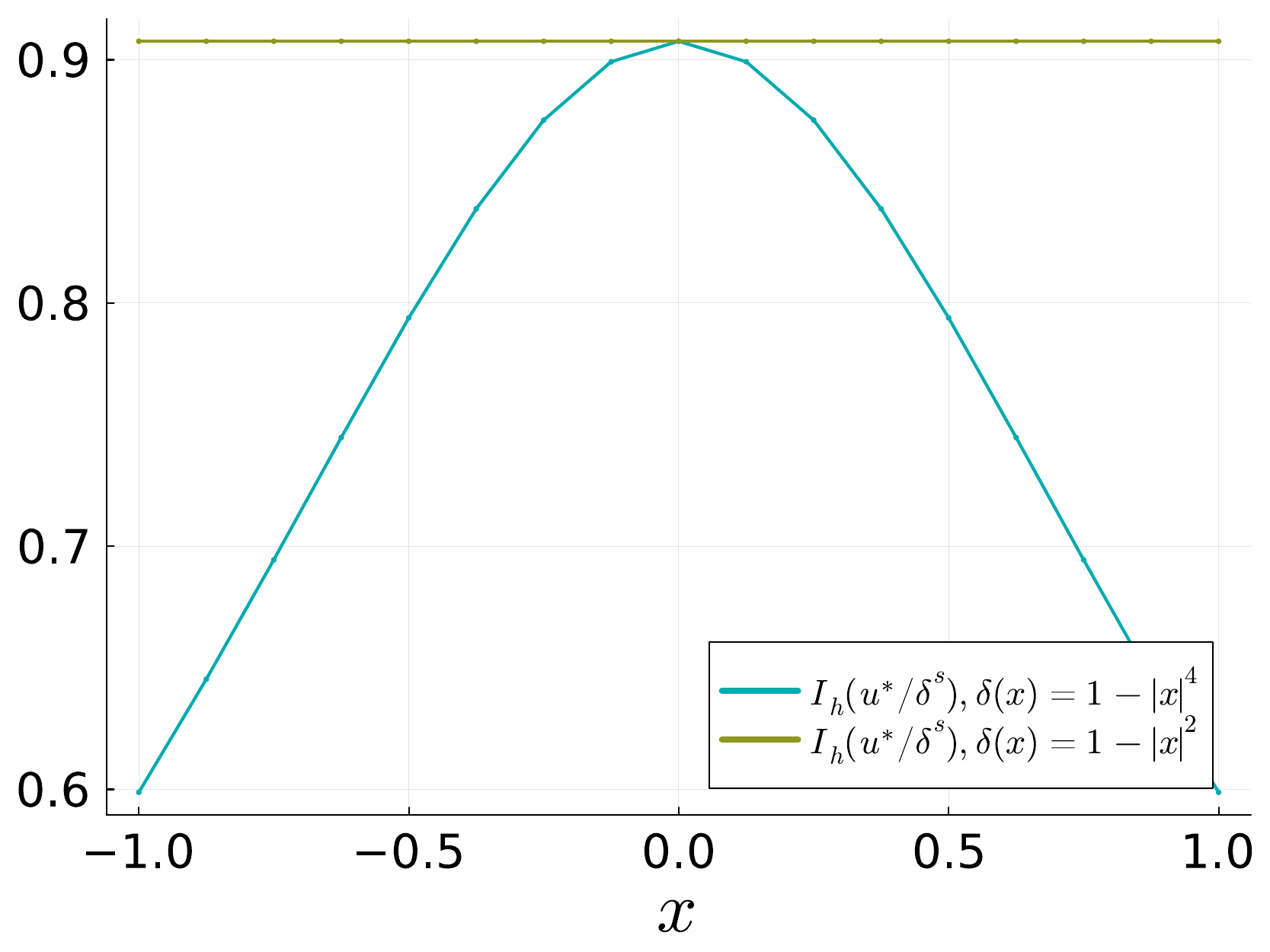}
        \caption{  $s=0.6$}
        \label{fig:s=0.6 plot I_h(u/delta^s)}
    \end{subfigure}
    \caption{Comparison between explicit solution
        \eqref{eq:explicitSol}, piece-wise linear interpolation $I_h u^\ast$ and $J_h u^\ast$, defined in \eqref{eq:competitor}, for different choices of the function $\delta$. $\Omega=\Omega_h=(-1,1)$, uniform meshes with $h=|\Omega|/2^4$.
    }
    \label{fig: interpolation 1d}
\end{figure}

\subsection{\texorpdfstring{$H^s$}{Hs} and \texorpdfstring{$L^2$}{L2} rates when \texorpdfstring{$f = 1$}{f=1}}
As before, let $u^*$ be the explicit solution in \eqref{eq:explicitSol} with $d=1$, $R=1$, and $x_0 = 0$, i.e., $\Omega = (-1,1)$ and $f = 1$.
In \Cref{fig: convergence Hs L2}  we validate the theoretical results that we have presented in
\Cref{thm:convergence rate Hs,cor:convergence rate from regularity of f,cor:convergence rate in L2}.
In particular, with the choice of $f=1$ we put ourselves in the case of maximal $H^s$-rate of \Cref{cor:convergence rate from regularity of f}, i.e., $h^{2-s}\lvert\log h\rvert^{\frac{1}{2}}$.
As shown in the \Cref{fig: convergence Hs L2}\subref{fig: convergence Hs f=1} the observed convergence rates match the theoretical prediction, with the observed slope of every convergence line in log scale for different $s$ being approximately $2-s$. \Cref{fig: convergence Hs L2}\subref{fig: convergence L2 f=1} instead is linked to the fact that, even we are only able to prove \Cref{cor:convergence rate in L2}, we would expect the convergence to be quadratic in $L^2(\Omega)$.

\begin{figure}[!ht]
    \centering

    \begin{subfigure}{0.49 \textwidth}
        \centering
        \includegraphics[width=0.99\textwidth]{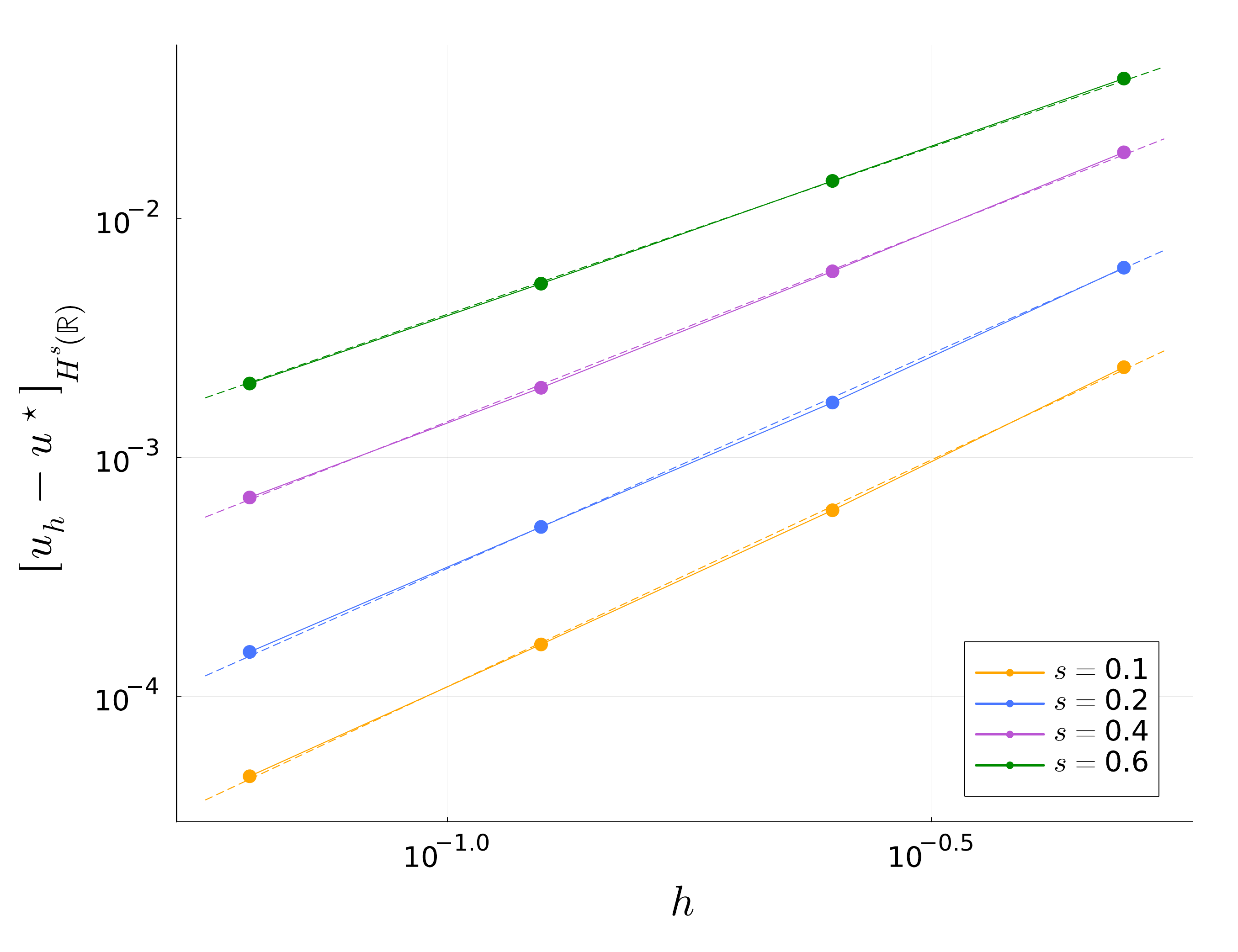}
        \caption{$H^s(\mathbb{R}^d)$-seminorm error between the WFEM approximation and the solution. Dashed lines: $g(h)=Ch^{2-s}$.
        }
        \label{fig: convergence Hs f=1}
    \end{subfigure}
    \hfill
    \begin{subfigure}{0.49 \textwidth}
        \centering
        \includegraphics[width=0.99\textwidth]{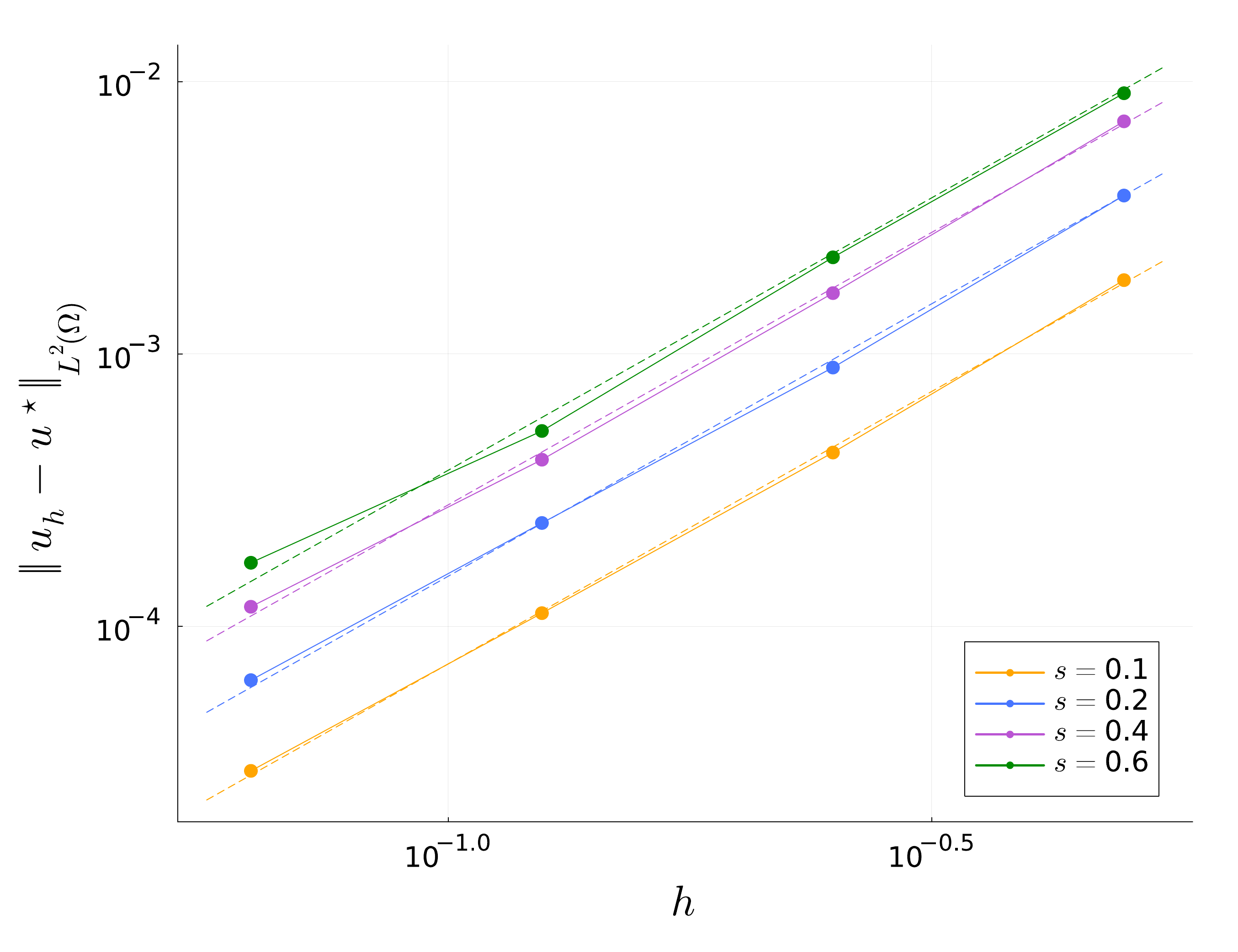}

        \caption{$L^2(\Omega)$ error between the WFEM approximation and the solution. Dashed lines: $g(h)=Ch^2$.
        }
        \label{fig: convergence L2 f=1}
    \end{subfigure}
    \caption{WFEM computational errors. $\Omega = \Omega_h = (-1,1)$, $f=1$,
        and $\delta(x)=1-|x|^4$.
    }
    \label{fig: convergence Hs L2}
\end{figure}

In \Cref{tab: convergence Hs f=1} we report the observed convergence rates for the example corresponding to the slope of the segments in \Cref{fig: convergence Hs L2}\subref{fig: convergence Hs f=1}. The values have been computed using the incremental ratio formula
$(\log [e_h]_{H^s(\Rd)} - [e_{2h}]_{H^s(\Rd)})/(\log h - \log 2h)$, where we recall that $e_h  = u_h - u^\ast$.

\begin{table}[!ht]
    \centering
    \caption{Convergence rates corresponding to \Cref{fig: convergence Hs L2}\subref{fig: convergence Hs f=1}}
    \label{tab: convergence Hs f=1}
    \scriptsize
    \begin{tabular}{lcccc}
        \toprule
        $s$                 & 0.1     & 0.2     & 0.4     & 0.6      \\
        \midrule
        observed $h=2^{-2}$ & 1.9914  & 1.87737 & 1.65643 & 1.42484  \\
        observed $h=2^{-3}$ & 1.86733 & 1.73141 & 1.62081 & 1.430124 \\
        observed $h=2^{-4}$ & 1.83477 & 1.73966 & 1.52709 & 1.38995  \\
        \midrule
        predicted           & 1.9     & 1.8     & 1.6     & 1.4      \\
        \bottomrule
    \end{tabular}
\end{table}

\subsection{\texorpdfstring{$H^s$}{Hs} and \texorpdfstring{$L^2$}{L2} rates when \texorpdfstring{$u(x) = (1 - |x|^2)_+$}{u(x) = (1-|x|2)+}}\label{subsec:bonito}

In \cite{Bonito2019}, the authors consider the solution of \eqref{eq:fracDirProb} given by $u(x) = (1 - |x|^2)_+$, which clearly does not behave like $\dOmega^s$. Using Fourier transform techniques, they compute the corresponding right-hand side
\begin{equation}
    \label{eq:RHS parabola}
    f(x) \coloneqq (-\Delta)^s u(x) = \frac{4^{s} \Gamma(d/2 + s)}{\Gamma(d/2)\,\Gamma(2-s)}\, {}_{2}F_1\!\bigg(\frac{d}{2}+s,\; s-1;\; \frac{d}{2};\; |x|^2\bigg),
\end{equation}
where ${}_{2}F_1$ denotes the Gaussian hypergeometric function. Since $u \not\ge c\,\delta^s$, it follows from \eqref{eq:fpositive} that $f$ must change sign. In fact, for $s \ge 1/2$ and $d=1$, one has $f \notin L^\infty(\Omega)$.

This type of right-hand side has been used for validation of FEM schemes with piece-wise linear functions in \cite{Bonito2019}, where the authors report a rate of $h^{\frac{3}{2}-s}$.
Since $u \in C^\infty(\overline \Omega)$ it is natural that piece-wise linear basis functions yield good approximation.

Since $u/\delta^s \sim \delta^{1-s}$,
\Cref{thm:convergence rate Hs} yields at most a rate of $h^{1-2s}$ when $s \in (0,1/2)$ (the same range where $f \in L^\infty(\Omega)$). This limitation is related to the fact that optimal rates for smooth $f$ rely on $C^{s+\varepsilon}$-regularity to control $H^s$-errors.

Although this goes against the philosophy of our basis choice, we have performed numerical experiments with this right-hand side for completeness. Surprisingly, the observed rates in \Cref{fig: convergence Hs L2 u=parabola} match those in \cite{Bonito2019} for $s \in [1/2,1)$ and even improve them for $s \in (0,1/2)$. An interesting open question is to rigorously verify and justify this behaviour.
In order to approximate the right-hand side of \eqref{eq:main_prob_FE} we perform the experiments in $\Omega = \Omega_h = (-1+\varepsilon,1-\varepsilon)$, $\varepsilon=10^{-10}$.

\begin{figure}[!ht]
    \centering

    \begin{subfigure}{0.49 \textwidth}
        \centering
        \includegraphics[width=0.99\textwidth]{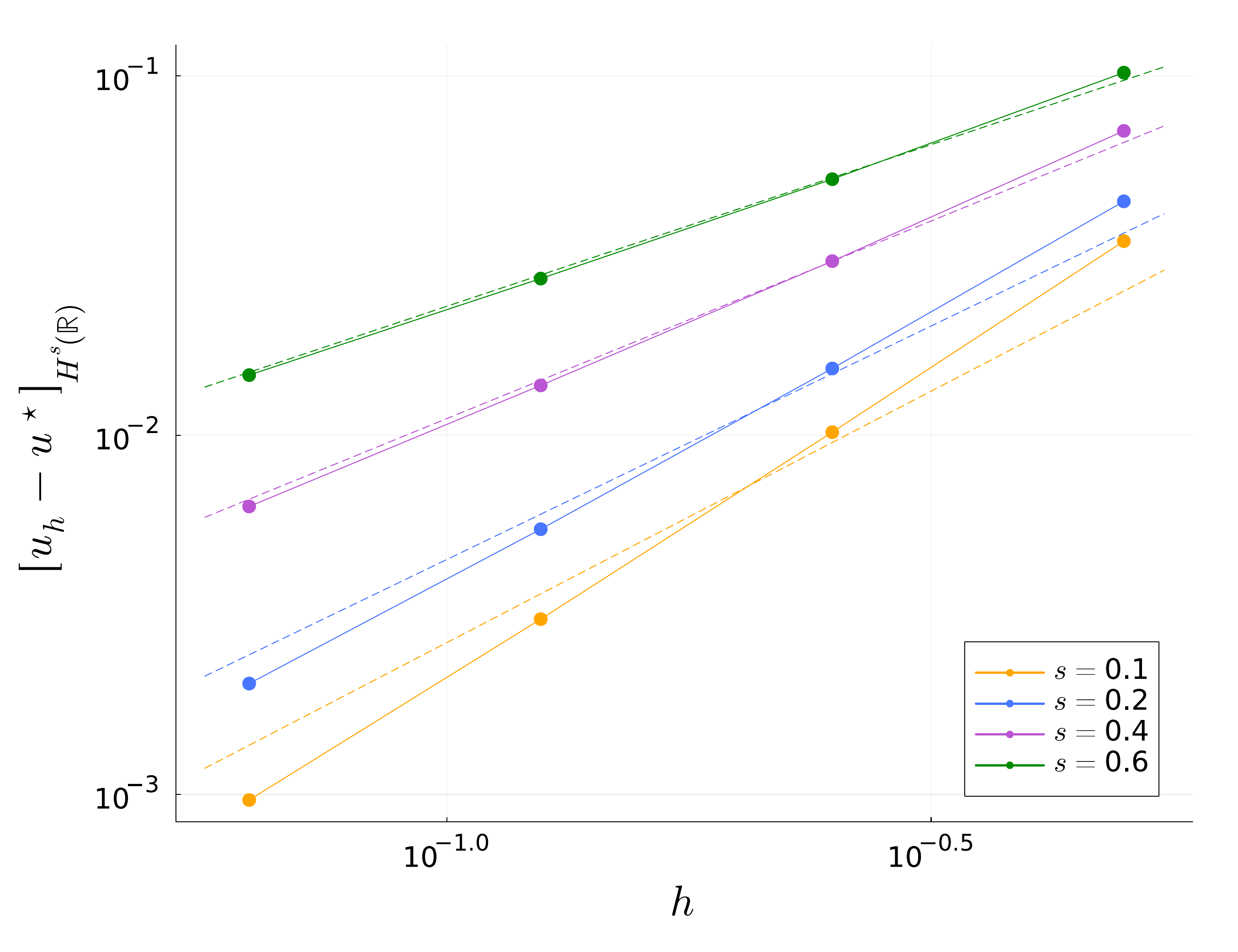}
        \caption{$H^s(\mathbb{R}^d)$-seminorm error between the WFEM approximation and the solution. Dashed lines: $g(h)=Ch^{\frac 3 2 - s}$.
        }
        \label{fig:convergence Hs u=parabola}
    \end{subfigure}
    \hfill
    \begin{subfigure}{0.49 \textwidth}
        \centering
        \includegraphics[width=0.99\textwidth]{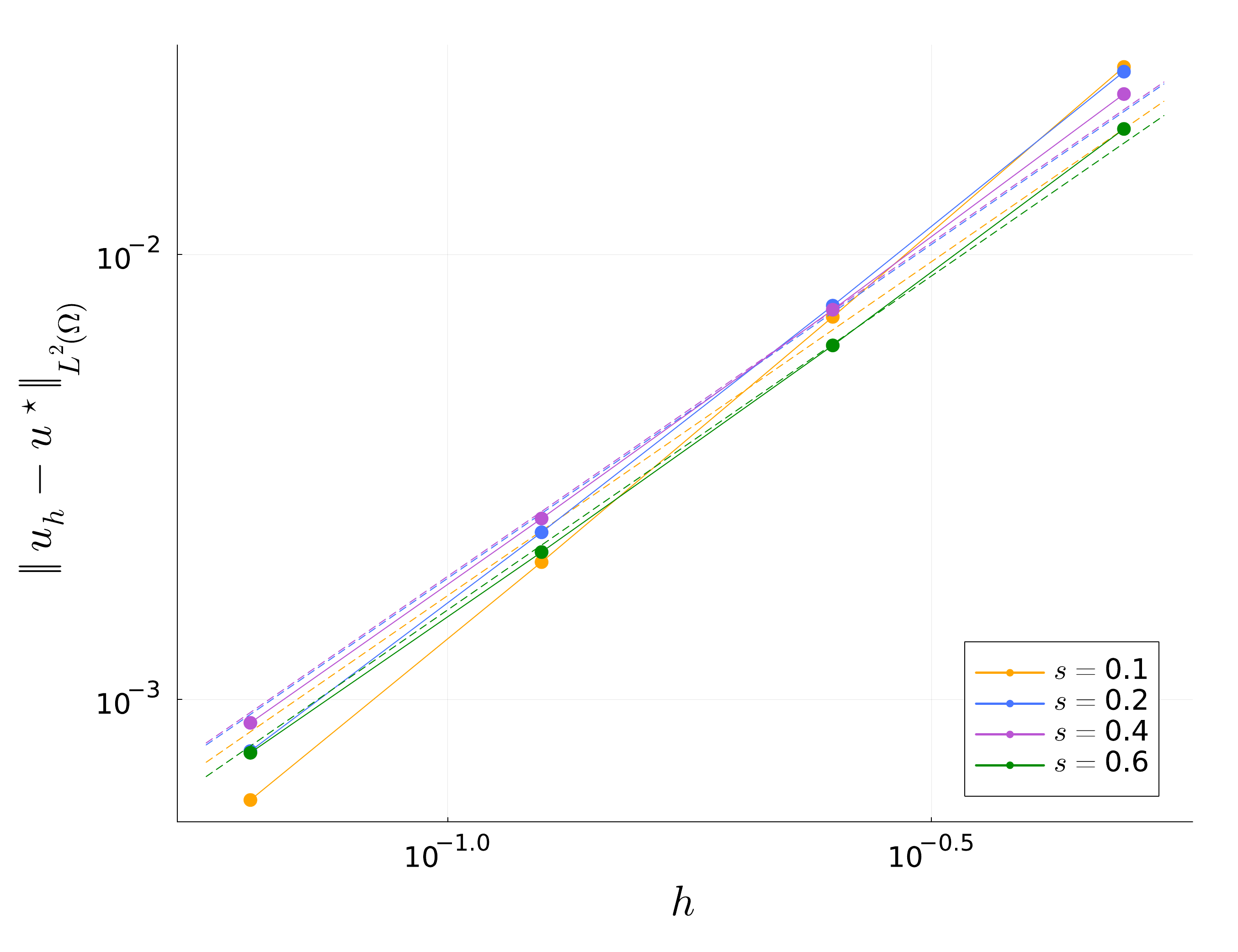}

        \caption{$L^2(\Omega)$ error between the WFEM approximation and the solution. Dashed lines: $g(h)=Ch^{\frac 3 2}$. }
        \label{fig:convergence L2 u=parabola}
    \end{subfigure}
    \caption{
        WFEM computational errors. $\Omega = (-1+\varepsilon,1-\varepsilon)$, $\varepsilon=10^{-10}$,  $f$ given by \eqref{eq:RHS parabola}, $\delta(x)=1-|x|^4$, and $u^* = (1-|x|^2)_+$.
    }
    \label{fig: convergence Hs L2 u=parabola}
\end{figure}

In \Cref{tab: convergence Hs u=parabola}  we report the observed convergent rates for the example corresponding to the slope of the segments in \Cref{fig: convergence Hs L2 u=parabola}\subref{fig:convergence Hs u=parabola}. The values have been computed using the incremental ratio formula $(\log u_{h} - \log u_{2h})/(\log h - \log 2h)$.
\normalcolor

\begin{table}[!ht]
    \centering
    \caption{Convergence rates corresponding to \Cref{fig: convergence Hs L2 u=parabola} \subref{fig:convergence Hs u=parabola}}
    \label{tab: convergence Hs u=parabola}

    \scriptsize
    \begin{tabular}{lcccc}
        \toprule
        $s$                                & 0.1     & 0.2     & 0.4     & 0.6      \\
        \midrule
        observed $h=(1-\varepsilon)2^{-2}$ & 1.7669  & 1.54577 & 1.20472 & 0.985209 \\
        observed $h=(1-\varepsilon)2^{-3}$ & 1.72902 & 1.48647 & 1.14931 & 0.919176 \\
        observed $h=(1-\varepsilon)2^{-4}$ & 1.67254 & 1.42748 & 1.11971 & 0.892579 \\
        \midrule
        predicted in \cite{Bonito2019}     & 1.4     & 1.3     & 1.1     & 0.9      \\
        \bottomrule
    \end{tabular}
\end{table}

\section*{Acknowledgements}
FdT was supported by the Spanish Government through RYC2020-029589-I, PID2021-
127105NB-I00 and CEX2019-000904-S funded by the MICIN/AEI.
The research of DGC was supported by grants RYC2022-037317-I and PID2023-151120NA-I00 from the Spanish Government MCIN/AEI/10.13039/\-501100011033/FEDER, UE.

\printbibliography

\appendix

\section{Functional inequalities}

\begin{lemma}\label{lem:HsWalpha_inclusion}
    Let $s\in (0,1)$ and $\alpha \in (s,1]$. Let $v\in W^{\alpha,\infty}(\R^d)$ such that $\operatorname{supp} v \subset B_R$ for some $R$. Then $v\in H^s(\R^d)$ and
    \[
        [v]_{H^s(\R^d)}\leq C_{d,s,R}\left(\|v\|_{L^\infty(\R^d)} +\frac{1}{\sqrt{\alpha-s}} [v]_{W^{\alpha,\infty}(\R^d)}\right).
    \]
\end{lemma}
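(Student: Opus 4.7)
The plan is to split the Gagliardo double integral defining $[v]_{H^s(\R^d)}^2$ into a near-diagonal region $\{|x-y|\leq 1\}$ and a far-field region $\{|x-y|>1\}$, bounding the integrand differently in each. The compact support condition $\operatorname{supp} v \subset B_R$ will be essential to localise both integrals and avoid divergence at infinity.

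In the near-diagonal region I would use the Hölder bound $|v(x)-v(y)|\leq [v]_{W^{\alpha,\infty}(\R^d)}|x-y|^\alpha$, which controls the integrand by $[v]_{W^{\alpha,\infty}(\R^d)}^2|x-y|^{2\alpha-d-2s}$. Because the integrand vanishes unless at least one of $x,y$ lies in $B_R$, restricting to $|x-y|\leq 1$ confines both variables to $B_{R+1}$. Passing to polar coordinates in $y$ centered at $x$, the radial integral $\int_0^1 r^{2(\alpha-s)-1}\,\dd r = \tfrac{1}{2(\alpha-s)}$ is finite because $\alpha>s$, and this is precisely where the $\tfrac{1}{\alpha-s}$ factor enters. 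Multiplying by $|B_{R+1}|$ and the surface-area constant of $\partial B_1$ yields a bound of the form
\[
\iint_{|x-y|\leq 1} \frac{|v(x)-v(y)|^2}{|x-y|^{d+2s}}\,\dx\,\dy \leq \frac{C(d,R)}{\alpha-s}\,[v]_{W^{\alpha,\infty}(\R^d)}^2.
\]

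For the far-field region I would use the crude bound $|v(x)-v(y)|^2\leq 4\|v\|_{L^\infty(\R^d)}^2$, decomposing the $x$-integral according to whether $x\in B_R$ or $x\notin B_R$. In the first case the $y$-integral reduces to the convergent tail $\int_{|x-y|>1}|x-y|^{-d-2s}\,\dy$ depending only on $d$ and $s$. In the second case one has $v(x)=0$, so the integrand collapses to $|v(y)|^2|x-y|^{-d-2s}$ with $y\in B_R$, which integrates to another finite constant depending on $d,s,R$. Both contributions are therefore bounded by $C(d,s,R)\|v\|_{L^\infty(\R^d)}^2$, independently of $\alpha$.

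Adding the two estimates and applying $\sqrt{a+b}\leq \sqrt a + \sqrt b$ gives the stated inequality. I do not expect any serious obstacles: the only point requiring care is checking that the $\tfrac{1}{\sqrt{\alpha-s}}$ blow-up truly comes only from the radial integral and that no hidden $\alpha$-dependence creeps into the other constants, which is transparent from the estimates above.
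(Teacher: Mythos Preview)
Your proposal is correct and follows essentially the same approach as the paper: both split the Gagliardo integral into a near-diagonal piece controlled by the Hölder seminorm (producing the $\tfrac{1}{\alpha-s}$ factor via a radial integral) and a far-field piece controlled by $\|v\|_{L^\infty}$ using the compact support. The only cosmetic difference is that the paper first decomposes by support ($B_R\times B_R$ and $B_R\times(\R^d\setminus B_R)$, using symmetry) and then subdivides the cross term by $|x-y|\lessgtr 1$, whereas you split by $|x-y|\lessgtr 1$ first and then invoke support; the resulting estimates are identical.
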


\begin{proof}
    We will use the same name for different constants, just keep tracking of their dependence.
    To short the notation, let us write $g(x,y)=\frac{|v(x)-v(y)|^{2}}{|x-y|^{d+2s}}$.  Then,
    \begin{align*}
        [v]_{H^s(\R^d)}^2 & =\iint \limits_{B_R\times B_R} g(x,y) \dx\dy + 2 \iint\limits_{\substack{B_R\times (\R^d \setminus B_R) \\ |x-y|\leq1}} g(x,y) \dx\dy+ 2 \iint\limits_{\substack{B_R\times (\R^d \setminus B_R)\\ |x-y|>1}} g(x,y) \dx\dy \\
                          & \defeq I_1+I_2+I_3.
    \end{align*}
    Let us estimate each term above. First,
    \begin{align*}
        I_1=[v]_{W^{\alpha,\infty}(\R^d)}^2\iint \limits_{B_R\times B_R} \frac{|x-y|^{2\alpha}}{|x-y|^{d+2s}} \dx\dy\leq C_{d,R}[v]_{W^{\alpha,\infty}(\R^d)}^2 \int_0^{2R} r^{2\alpha-1-2s dr}\leq \frac{C_{d,R}}{\alpha-s}[v]_{W^{\alpha,\infty}(\R^d)}^2.
    \end{align*}
    Similarly, we have that
    \begin{align*}
        I_2\leq 2 \iint\limits_{B_{R+1}\times B_{R+1}} g(x,y) \dx\dy \leq \frac{C_{d,R}}{\alpha-s}[v]_{W^{\alpha,\infty}(\R^d)}^2.
    \end{align*}
    Finally, we deduce
    \begin{align*}
        I_3\leq 4\|v\|_{L^\infty(\R^d)}^2\int_{B_R}\int_{|x-y|>1} \frac{1}{|x-y|^{d+2s}}\dx \dy = C_{d,s,R}\|v\|_{L^\infty(\R^d)}^2.
    \end{align*}
    The proof is completed by observing that $(a^2+b^2)^{\frac{1}{2}}\leq |a|^{\frac{1}{2}}+|b|^{\frac{1}{2}}$ for all $a,b\in \R$.
\end{proof}

\end{document}